\DeclareMathSymbol{:}{\mathpunct}{operators}{"3A}
\renewcommand{\phi}{\varphi}
\DeclareMathOperator{\Art}{Art}
\DeclareMathOperator{\chern}{ch}
\DeclareMathOperator{\Td}{Td}
\DeclareMathOperator{\rk}{rk}
\newcommand{\m}{\bm{\mu}}
\newcommand{\B}{\bm{\mathrm B}}
\DeclareMathOperator{\Aut}{Aut}
\DeclareMathOperator{\End}{End}
\newcommand{\Q}{\mathbf{Q}}
\def\iso{\xrightarrow{\raisebox{-1pt}{\tiny$\sim$}}}
\newcommand\ms[1]{\mathscr{#1}}
\renewcommand{\H}{\operatorname{H}}
\newcommand{\tensor}{\otimes}
\DeclareMathOperator{\ob}{ob}
\newcommand{\G}{\bm{\mathrm G}}
\newcommand{\ltensor}{\stackrel{\mathbf{L}}{\otimes}}
\DeclareMathOperator{\LL}{\mathbf{L}}
\DeclareMathOperator{\R}{\mathbf{R}}
\newcommand{\pr}{\operatorname{pr}}
\newcommand{\Qcoh}{\operatorname{QCoh}}
\DeclareMathOperator{\Br}{Br}
\DeclareMathOperator{\BrSh}{\overline{Br}}
\DeclareMathOperator{\C}{\mathbf{C}}
\DeclareMathOperator{\Coh}{Coh}
\DeclareMathOperator{\Ext}{Ext}
\DeclareMathOperator{\id}{id}
\DeclareMathOperator{\Hom}{Hom}
\DeclareMathOperator{\Spec}{Spec}
\DeclareMathOperator{\Spf}{Spf}
\DeclareMathOperator{\Def}{Def}
\DeclareMathOperator{\pPerf}{\mathscr{P\!e\!r\!f}}
\DeclareMathOperator{\sPic}{\mathscr{P\!i\!c}}
\DeclareMathOperator{\Pic}{Pic}
\DeclareMathOperator{\Set}{Set}
\DeclareMathOperator{\tr}{tr}
\newcommand{\Z}{\mathbf{Z}}
\def\E{\mathscr{E}}
\def\oO{\mathscr{O}}
\def\X{\mathscr{X}}
\newtheorem{thm}{Theorem}
\renewcommand{\thethm}{\ifnum\value{subsubsection}>0{\thesubsubsection.\arabic{thm}}\else{\ifnum\value{subsection}>0{\thesubsection.\arabic{thm}}\else{\thesection.\arabic{thm}}\fi}\fi}
\numberwithin{equation}{subsection}
\newtheorem{lemma}[equation]{Lemma}
\newtheorem{lem}[equation]{Lemma}
\newtheorem{cor}[equation]{Corollary}
\newtheorem{prop}[equation]{Proposition}
\newtheorem{conj}[equation]{Conjecture}
\theoremstyle{remark}
\newtheorem{remark}[equation]{Remark}
\newtheorem{defn}[equation]{Definition}
\newtheorem{notation}[equation]{Notation}
\newtheorem{setting}[equation]{Setting}
\theoremstyle{definition}
\newtheorem{assumption}[equation]{Assumption}
\title{Fourier--Mukai partners of Enriques and bielliptic
  surfaces in positive characteristic}
\author{Katrina Honigs}
\affil{University of Utah}
\author{Max Lieblich} 
\affil{University of Washington}
\author[3,4]{Sofia Tirabassi}
\affil[3]{University of Bergen}
\affil[4]{Stockholm University}
\date{}
\begin{document}

\maketitle
\tableofcontents    

\section{Introduction}

A
\emph{Fourier--Mukai partner\/} of a smooth projective variety $X$
over a field $k$ is a smooth
projective variety $Y$ over $k$ such that there is a $k$-linear
equivalence of triangulated categories 
$D(X)\to D(Y)$, 
where $D(-)$ denotes the bounded derived category with coherent
cohomology (equivalently -- since $X$ and $Y$ are smooth -- perfect
complexes). 

A \emph{twisted variety\/} is a pair $(X,\alpha)$ with $X$
a variety and $\alpha\in\Br(X)$ a Brauer class. An isomorphism of
twisted varieties $(X,\alpha)\to(Y,\beta)$ is an isomorphism $f: X\to
Y$ such that $f^\ast\beta=\alpha$. 
Given a smooth projective twisted variety $(X,\alpha)$, we will let 
$D(X,\alpha)$ denote the bounded derived category of
$\alpha$-twisted $\ms O_X$-modules with coherent
cohomology.\footnote{There are some subtle foundational issues with this definition as it is written. See Section~\ref{sec:Definitions} for a discussion of ways in which it can be corrected.}
Another
pair $(Y,\beta)$ is a \emph{twisted Fourier--Mukai partner\/}
of $(X,\alpha)$ if there is a $k$-linear equivalence of triangulated
categories $D(X,\alpha)\to D(Y,\beta)$. We will say that $(X,\alpha)$
\emph{has no non-trivial twisted Fourier--Mukai partners\/} if any
twisted Fourier--Mukai partner
partner $(Y,\beta)$ of $(X,\alpha)$ is isomorphic to
$(X,\alpha)$.

In this note we prove the following results. 

\begin{thm}\label{thm:enriques}
  If $(X,\alpha)$ is a twisted Enriques surface over an algebraically closed field
  $k$ of characteristic at least $3$, then it does not have any
  nontrivial twisted Fourier--Mukai partners.
\end{thm}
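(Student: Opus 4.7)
\medskip

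\noindent The plan is to reduce the statement to a rigidity question about the K3 double covers, which exist because $\operatorname{char}(k)\geq 3$.

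First, if $(Y,\beta)$ is a twisted Fourier--Mukai partner of $(X,\alpha)$, I would argue that $Y$ must itself be an Enriques surface. Any such equivalence preserves the dimension, the order of the canonical class, the Euler characteristic $\chi(\oO_Y)=1$, and the Picard number. In characteristic at least $3$, the classification of surfaces then identifies $Y$ as an Enriques surface, so in particular it admits an étale K3 double cover $\pi_Y\colon \tilde Y\to Y$ and an analogous cover $\pi_X\colon \tilde X\to X$ exists.

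Second, I would lift the twisted equivalence to the K3 covers. Since $\operatorname{char}(k)\neq 2$, both $\pi_X$ and $\pi_Y$ are $\bm{\mu}_2$-torsors, and I would use the equivariant descent formalism for twisted derived categories to produce a twisted equivalence $D(\tilde X,\pi_X^*\alpha)\simeq D(\tilde Y,\pi_Y^*\beta)$ compatible with the deck involutions (possibly up to a character). Concretely, $D(X,\alpha)$ is recovered from $D(\tilde X,\pi_X^*\alpha)$ as the category of equivariant objects for the Enriques involution, and the lifted equivalence must respect this structure.

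Third, I would appeal to the theory of twisted Fourier--Mukai partners of K3 surfaces in positive characteristic, by which such partners form a controlled, lattice-theoretically parametrized set. The extra equivariance with respect to the two fixed-point-free involutions is very restrictive, essentially because an Enriques involution on a K3 surface is rigid. Combining this with the partner count on the K3 covers should force $\tilde Y\cong \tilde X$ equivariantly, and upon passing to quotients yield $Y\cong X$ with $\beta$ matching $\alpha$.

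The main obstacle lies in the second step: one must carefully track how the $\bm{\mu}_2$-action on $\tilde X$ interacts with the $\bm{\mu}$-gerbes representing $\pi_X^*\alpha$, and ensure that the equivalence lifts to a genuinely equivariant one with the correct character. A secondary difficulty is verifying that the K3 Fourier--Mukai partner results invoked in the third step cover the full range of finite-height and supersingular cases that can arise as covers of Enriques surfaces in characteristic at least $3$.
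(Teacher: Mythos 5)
Your first two steps are sound and in fact parallel machinery the paper also develops: the identification of $Y$ as an Enriques surface (the paper uses the order of $\omega$, the dimension, and $\ell$-adic Betti numbers plus the Bombieri--Mumford classification), and the equivariant lifting of a twisted equivalence to the $\m_2$-covers (this is essentially the relative/twisted Bridgeland--Maciocia statement, Proposition~\ref{BMRelative}). The genuine gap is your third step. You appeal to ``the theory of twisted Fourier--Mukai partners of K3 surfaces in positive characteristic, by which such partners form a controlled, lattice-theoretically parametrized set,'' but no such theory is available at the level of generality you need. Over $\C$, Bridgeland--Maciocia and Addington--Wray do not merely count partners of the K3 cover: they use the Torelli theorem to convert an involution-equivariant Hodge isometry into an equivariant isomorphism of the covers. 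In characteristic $p$ there is no Torelli theorem of the required strength for finite-height K3 surfaces, twisted or not; the known derived Torelli-type results (e.g.\ Lieblich--Olsson, and Bragg--Lieblich in the supersingular twisted case) are themselves proved by lifting to characteristic $0$, and the supersingular statements carry characteristic restrictions that would not obviously cover $p=3$. The assertion that ``an Enriques involution on a K3 surface is rigid'' is not a substitute for this missing input, so the step from an equivariant equivalence of covers to an equivariant isomorphism does not go through as written.

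By contrast, the paper never tries to classify partners of the cover in characteristic $p$. It uses the cover only to kill obstruction classes: pulling the fibers of the kernel back along $\pi_Y$ and tracing the obstruction down to determinants (Lemmas~\ref{L:obstruction-pulls-back}, \ref{L:pullback-thang}, \ref{L:pullback-bi}) shows that the image points $\Phi_P(\ms L_x)$ land in the smooth locus of the moduli stack $\pPerf$ of perfect complexes. This yields an isomorphism of deformation functors $\Def_{(Y,\beta)}\to\Def_{(X,\alpha)}$ along which the kernel deforms (Theorems~\ref{T:defo-kernels}, \ref{T:it-moves}); one then lifts everything to characteristic $0$, invokes the complex results of Bridgeland--Maciocia and Addington--Wray to get an isomorphism over $\C$, and specializes it back using the Matsusaka--Mumford theorem together with Proposition~\ref{prop:brauer-var} to match the Brauer classes. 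If you want to salvage your outline, you would either have to prove the positive-characteristic twisted K3 partner/Torelli statement you are invoking --- which is a substantial theorem in its own right and, as far as is known, itself requires lifting --- or redirect the equivariant idea toward the deformation-theoretic role it plays in the paper.
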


\begin{thm}\label{thm:bielliptic}
  If $X$ is a bielliptic surface over an
  algebraically closed field $k$ of characteristic at least $5$,
  then it does not have any nontrivial (untwisted) Fourier--Mukai partners.
\end{thm}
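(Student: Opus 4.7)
The plan is to adapt the characteristic-zero argument of Bridgeland--Maciocia to positive characteristic, exploiting that $\charr k\geq 5$ is coprime to the possible orders of the canonical class of a bielliptic surface.

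First I would show that any Fourier--Mukai partner $Y$ of the bielliptic surface $X$ is itself bielliptic of the same type. The Kodaira dimension is a derived invariant, giving $\kappa(Y)=0$. The order $n$ of $\omega_X$ in $\Pic(X)$ is intrinsic to $D(X)$ via the Serre functor, so $\omega_Y$ has the same order $n\in\{2,3,4,6\}$, which already rules out K3 and abelian surfaces. Hochschild homology supplies enough Hodge data (e.g.\ $h^{0,1}$) to distinguish bielliptic from Enriques. Since in characteristic $\geq 5$ the Bombieri--Mumford classification of minimal surfaces with $\kappa=0$ does not admit the quasi-hyperelliptic case, $Y$ must be bielliptic, and the value of $n$ determines the type.

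Next I would pass to canonical covers. Because $\charr k\geq 5 > n$, the canonical cover $\pi\colon A\to X$ is étale Galois with cyclic Galois group of order $n$, and $A$ is an abelian surface, isomorphic to a product $E\times F$ of elliptic curves in which $F$ carries an order-$n$ automorphism fixing the origin (hence has complex multiplication when $n\neq 2$). The same analysis applied to $Y$ yields an abelian cover $B\to Y$. Standard equivariant descent for finite group actions of order coprime to $\charr k$ then lifts the given equivalence $\Phi\colon D(X)\to D(Y)$ to an equivariant equivalence $\widetilde{\Phi}\colon D(A)\to D(B)$ intertwining the two Galois actions.

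Finally I would classify the Fourier--Mukai partners of $A$ in a way compatible with the equivariance, and descend the resulting isomorphism $A\cong B$ to an isomorphism $X\cong Y$. By Orlov's criterion for abelian varieties, $D(A)\simeq D(B)$ forces a symplectic isomorphism $A\times\widehat{A}\cong B\times\widehat{B}$; combined with the rigidity coming from the CM factor $F$ and compatibility with the cyclic group action, this should pin down $B$ up to isomorphism and the Galois action up to conjugation, yielding $X\cong Y$ after descent. The main obstacle is precisely this final classification step: in characteristic zero one argues via Hodge-theoretic periods on the Mukai lattice, while in our setting one must substitute a crystalline or $\ell$-adic Mukai-lattice argument and track the equivariant structure throughout, which is where the restriction $\charr k\geq 5$ and the coprimality of $n$ to $\charr k$ enter most essentially.
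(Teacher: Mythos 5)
Your strategy is a direct transplant of the Bridgeland--Maciocia argument into characteristic $p$, and its first two steps are plausible: the partner $Y$ is again bielliptic (the paper gets this from Lemma~\ref{lem:canonical}, the cohomological isomorphisms of Lemma~\ref{lem:coho-iso}, and the Bombieri--Mumford classification, essentially as you suggest), and the equivalence does lift equivariantly to the canonical covers when $n$ is invertible in $k$ (a relative form of this is Proposition~\ref{BMRelative}). But the final step is a genuine gap, and it is exactly the step you flag as ``the main obstacle'': derived equivalent abelian surfaces are very far from being isomorphic in general, so everything rests on using the symplectic isomorphism $A\times\widehat{A}\cong B\times\widehat{B}$ \emph{together with} the cyclic equivariant structure and the CM factor to force $A\cong B$ with matching Galois actions up to conjugation. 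You give no argument for this, and the tool that makes it work over $\C$ --- Torelli-type/Hodge-theoretic control of the Mukai lattice --- has no off-the-shelf crystalline or $\ell$-adic substitute; ``one must substitute a crystalline or $\ell$-adic Mukai-lattice argument'' is precisely the unsolved content, not a routine adaptation. (There are also smaller inaccuracies: the canonical cover of a bielliptic surface is in general only isogenous to a product $E\times F$, namely a quotient of it by a finite translation subgroup, not the product itself; and the equivariant refinement of Orlov's criterion that you need --- pinning down the action, not just the surface --- is not part of any statement you cite.)

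For contrast, the paper avoids needing any positive-characteristic Torelli input altogether: it lifts the kernel $P$ itself to characteristic $0$ by deformation theory (the canonical-cover/equivariance trick is used only to show the obstruction classes of the fibers $\Phi_P(\ms L_x)$ vanish, via Lemmas~\ref{L:obstruction-pulls-back}--\ref{L:pullback-bi} and the moduli space $\pPerf$), algebraizes to a relative Fourier--Mukai equivalence over a lift (Theorems~\ref{T:defo-kernels} and \ref{T:it-moves}), invokes the known complex result of Bridgeland--Maciocia over $\C$, and then specializes the resulting isomorphism back to $k$ using Matsusaka--Mumford and Proposition~\ref{prop:brauer-var}. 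If you want to salvage your route, you would have to actually prove the equivariant classification of Fourier--Mukai partners of the (isogeny-trivial) abelian cover in characteristic $p\ge 5$; as written, the argument stops exactly where the difficulty begins.
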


These results were proven in the untwisted case over the
complex numbers by Bridgeland and Maciocia
\cite{bridgesurfaces}. Their argument uses Torelli theorems to reduce
the result to lattice-theoretic assertions, and thus relies heavily on
the complex numbers. The twisted cases of these results are proven over the
complex numbers by Addington and Wray with methods analogous to those
of Bridgeland and Maciocia, with the requisite modifications to
handle the twists.  

Our approach to proving this in positive characteristic is to lift
(twisted) derived equivalences to characteristic~$0$ and specialize isomorphisms using
the Matsusaka--Mumford theorem. The strategy is similar to that taken
in \cite{LO}. In the present situation, the central issue
 is the question of whether the fibers of the kernel,
viewed as complexes on $Y$, are smooth points in the moduli space of
complexes, something that is automatic for K3 surfaces
since obstruction spaces themselves vanish.
While the obstruction spaces for simple complexes on Enriques and
bielliptic surfaces are nonvanishing, we can
nevertheless show that the obstruction \emph{classes\/} vanish 
by using equivariant techniques similar to those originally used in
\cite{bridgesurfaces}.

In Section~\ref{sec:relat-bridg-maci} we briefly discuss the positive
characteristic version of the equivariance results of
\cite{bridgequotient}. In Section~\ref{sec.fi} we work through 
 producing an isomorphism
of deformation spaces that gives us a way to lift kernels. 
The extension of the ideas of \cite{deformingkernels} to the twisted setting, 
discussed in Section~\ref{sec:moduli-perf}, provides a framework for the argument.
In Section~\ref{sec.fo} we complete the proof of Theorems
\ref{thm:enriques} and \ref{thm:bielliptic}.  

\subsection*{Acknowledgments}
Honigs was partially supported by NSF MSPRF grant DMS-160628 and thanks Nick Addington and Karl Schwede for helpful conversations. Lieblich was
partially supported by NSF CAREER DMS-1056129, NSF standard grant
DMS-1600813, and a Simons Foundation Fellowship; he thanks Martin Olsson for helpful remarks. Tirabassi
was partially supported by grant 261756 of the Research Councils of
Norway; she is grateful to Christian Liedtke and Richard Thomas for
very interesting and stimulating mathematical discussions.

\section{Twisted sheaves}
\label{sec:twist-schem-some}

In this section we develop enough of the theory of twisted schemes and
moduli of perfect complexes of twisted sheaves to put our questions in
families. This is well-trodden in the literature (see e.g.~ \cite{caldararu_thesis,max_twisted,MR2306170} for twisted sheaves and \cite{deformingkernels} for moduli of perfect complexes), 
so we only
indicate the essentials here. The main distinction between our treatment here
and the existing literature is our careful attention to the distinction between
$\G_m$-twisting and $\m_n$-twisting. We will occasionally have to pass between
them, but we try to explain how the equivalences among the various models transfer.

\subsection{Rigidifications of twisted sheaves}
\label{sec:Definitions}

\begin{defn} 
  \begin{enumerate}
  \item A \emph{twisted scheme\/} is a pair $(X,\alpha)$ with $X$ a
    scheme and $\alpha\in\Br(X)$.
  \item A \emph{morphism of twisted schemes\/}
    $(X,\alpha)\to(Y,\beta)$ is a morphism $f: X\to Y$.
  \item The \emph{relative twisting class\/} of a morphism
    $f: (X,\alpha)\to (Y,\beta)$ is the class $f^\ast\beta-\alpha$.
  \item A morphism of twisted schemes is \emph{untwisted\/} if the relative twisting class is $0$. 
  \item An isomorphism of twisted schemes is an untwisted morphism of twisted schemes whose
    underlying morphism $f: X\to Y$ is a isomorphism.
  \end{enumerate}
\end{defn}

We will say that a morphism $f: (X,\alpha)\to (Y,\beta)$ has a property
$P$ (e.g., is \emph{flat\/},
\emph{proper\/}, \emph{locally of finite presentation\/}, etc.) if
$P$ holds for the underlying morphism $f: X\to Y$. 

\smallskip

Our purpose in defining twisted schemes is to study twisted sheaves.
In order to
properly handle the theory of determinants and Mukai vectors for different types
of twisting coefficients ($\G_m$ and $\m_n$), we will need to keep track of
several categories at once.

Fix a twisted scheme $(X,\alpha)$ and an integer $n$ invertible in the base.
Categories of ($n$-)twisted sheaves are
  not naturally associated to the pair $(X,\alpha)$, but rather to a
  ``rigidification'' of $\alpha$.
There are several ways to choose this rigidification 
(see \cite[\S3.1]{MR2388554} for more detail), which all yield equivalent categories of sheaves and, hence, derived categories of sheaves:

\begin{enumerate}
\item One type of rigidification uses a choice of \v{C}ech $2$-cocycle representatives for $\alpha$ \cite[Definition~1.2.1]{caldararu_thesis}
and common in the complex-analytic literature, but we will not
  use it here.
\item Choose an Azumaya algebra $\ms A$ on $X$ representing
  $\alpha$. An $n$-fold twisted sheaf can then be identified with an $\ms
  A^{\tensor n}$-module. 
\item Choose a closed subgroup $\G\subset\G_m$ and $\G$-gerbe $\ms X\to X$ whose
  associated Brauer class 
  is $\alpha$.
A locally free $\ms O_{\ms X}$-module $F$ has a natural sum decomposition by the weights of the right inertial action of $\G$
(cf.~\cite[\S 12.3]{olsson})
\begin{equation}\label{decomp}
F\simeq\bigoplus_{i\in\mathbb{Z}}F_i
\end{equation}
where we index the characters $\G\to \G_m$ via the surjection $\Z\simeq\Hom(\G_m,\G_m)\to\Hom(\G,\G_m)$:
the $i$-th component has  action given by scalar multiplication $i$ times. 
An $n$-fold twisted sheaf is a locally free $\ms O_{\ms X}$-module whose decomposition rests purely in weight $n$.
\end{enumerate}

\begin{remark}
In the second model, the notions of
flatness, quasi-coherence, coherence, homological dimension, etc., are
all equivalent to the same notions for the underlying $\ms
O_X$-module (since any Azumaya algebra is locally Morita-equivalent to
the structure sheaf). 
In the third model, those notions are the usual ones for
sheaves on a stack, and so we may seamlessly consider perfect complexes in either setting.

Unfortunately, the second realization does not preserve rank,
  determinant, etc, but the third is
 useful for studying
  determinants, ranks, Chern characters, etc., and so we work  with this realization.
\end{remark}

\begin{notation}
  We will write $\Coh^{(n)}(X,\alpha)$, resp. $\Qcoh^{(n)}(X,\alpha)$, resp.
  $D^{(n)}(X,\alpha)$ for the category of coherent $n$-fold twisted sheaves, resp.
  quasi-coherent $n$-fold twisted sheaves, resp. bounded derived category of
  $n$-fold twisted 
  sheaves with coherent cohomology. When we have a gerbe model $\ms X\to X$ of
  $\alpha$, we will write $\Qcoh^{(n)}(\ms X)$, etc., for the various categories
  of twisted sheaves.
\end{notation}

\subsection{Determinants}
In this section we discuss the determinant of a perfect complex of
twisted sheaves, using the gerbe definition of twisted sheaves.
For some closed subgroup $\G\subset\G_m$, with dual group
$\widehat \G:=\Hom(\G,\G_m)$, fix a $\G$-gerbe $\ms X\to X$. Let $\Pic(\ms X)$ be the group of invertible sheaves on $\X$.
The decomposition \eqref{decomp} shows the elements of $\Pic(\ms X)$ must have  pure weight and hence there is a natural map taking a sheaf to its character:
  \[
    \Pic(\ms X)\to\widehat \G
  \]
Since the group law on $\widehat \G$ is determined by
 tensoring characters together, this morphism is a group homomorphism.

\begin{lem}\label{lem:det-tw}
  Given a perfect complex $P\in D^{(a)}(\ms Y)$ of rank $r$, the determinant
  sheaf $\det(P)$ lies in $\Pic^{(ra)}(\ms Y)\subset\Pic(\ms Y)$.
\end{lem}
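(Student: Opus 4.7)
The plan is to compute the weight of $\det(P)$ by reducing, locally on $\ms Y$, to the case of a single locally free $a$-fold twisted sheaf, where the statement is a direct calculation with the inertial $\G$-action on exterior powers.

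First I would recall that the weight decomposition \eqref{decomp} is functorial and induces a splitting of the abelian category $\Qcoh(\ms Y) = \bigoplus_{i\in\Z}\Qcoh^{(i)}(\ms Y)$, and therefore a corresponding direct sum decomposition of $D(\ms Y)$. In particular, a perfect complex $P\in D^{(a)}(\ms Y)$ may be represented, locally on $\ms Y$, by a bounded complex $E^\bullet$ of locally free coherent sheaves each of which lies in pure weight $a$: start with any bounded locally free representative and project onto the weight-$a$ summand, which is again locally free because the weight decomposition of a locally free sheaf is by direct summands. The rank of $P$ then equals $\sum_i (-1)^i r_i$, where $r_i = \rk(E^i)$, and the determinant of $P$ is given locally by $\det(P) \simeq \bigotimes_i \det(E^i)^{\otimes (-1)^i}$.

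Next, for a locally free sheaf $E$ of rank $s$ in pure weight $c$, the tensor product $E^{\otimes s}$ carries the inertial $\G$-action by the $s$-fold product of the defining character, hence has weight $sc$; as a $\G$-equivariant quotient, $\det(E)=\wedge^s E$ inherits pure weight $sc$, so $\det(E) \in \Pic^{(sc)}(\ms Y)$. Combining this with the fact, noted just before the lemma, that the weight map $\Pic(\ms Y)\to\widehat\G$ is a homomorphism with respect to tensor products, the local determinant computation yields weight
\[
\sum_i (-1)^i \, r_i a \;=\; a\sum_i (-1)^i r_i \;=\; ra.
\]
Since $\det(P)$ is a globally defined line bundle on $\ms Y$ (which we may assume connected) and weight is a locally constant invariant detected on any nonempty open, this identifies $\det(P)$ as an element of $\Pic^{(ra)}(\ms Y)$.

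The main obstacle is essentially bookkeeping: verifying that one may choose representing complexes with terms in pure weight $a$, and that the local determinant formula assembles to the global determinant with the expected weight. Both points reduce cleanly to the semisimplicity of the $\G$-action on quasi-coherent sheaves on $\ms Y$, so no deeper input is needed beyond the homomorphism property of the character map already established.
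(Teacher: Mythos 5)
Your proof is correct and takes essentially the same approach as the paper: both reduce to a local/pointwise computation and evaluate the determinant of a bounded complex of pure-weight-$a$ terms as the alternating tensor product of their top wedge powers, each contributing weight $r_i a$, summing to $ra$. The paper performs this reduction by passing to a geometric fiber $\ms Y = \B\G_K$ and phrasing the computation in terms of graded $\G$-representations, whereas you work locally on $\ms Y$ with locally free weight-$a$ representatives and then invoke local constancy of the weight; these are the same argument in slightly different clothing.
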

\begin{proof}
  It suffices to prove this after passing to a geometric fiber of $\ms Y\to Y$,
  so we may assume that $\ms Y=\B\G_{K}$ for some algebraically closed field
  $K$. A perfect complex of $a$-fold twisted sheaves of rank $r$ is then
  identified with a graded $\G$-representation that has graded rank $r$ and
  whose underlying $\G$-representation is isotypic of type $a$. Since the
  determinant is computed as the alternating tensor product of the wedge powers
  of the factors, we see that the type of the determinant is precisely $ra$, as
  desired. 
\end{proof}

\subsection{Twisted kernels}
\label{sec:twisted-derived-equiv}

By \cite{twisted_kernel}, a large class of exact functors $D(X,\alpha)\to D(Y,\beta)$, including equivalences, are given by Fourier-Mukai transforms. In the classical literature on twisted Fourier-Mukai functors, the kernel is given as an object in 
$D(X\times Y,\alpha^{-1}\boxtimes\beta)$, where 
$\boxtimes$ denotes the tensor product of the pullbacks of $\alpha^{-1}$ and $\beta$ to the product $X\times Y$.

However, it is \textit{a priori} unclear how, in the gerbe point of view, 
Fourier-Mukai functors
constructed using the objects in 
$D(X\times Y,\alpha^{-1}\boxtimes\beta)$, 
which are $1$-twisted perfect complexes on a $\G$-gerbe,
yield a map from $1$-twisted sheaves on a gerbe associated to $\alpha$  to $1$-twisted sheaves on a gerbe associated to $\beta$.
There is an identification hiding in the background of many treatments of this subject that makes our gerbe approach to twisted Fourier-Mukai kernels equivalent to the above definition, which we take the opportunity to discuss here.

Fix a closed subgroup $\G\subset\G_m$.
Suppose $Y$  is a quasi-compact quasi-separated scheme and $\ms Z\to Y$
and $\ms Z'\to Y$ are $\G$-gerbes. 
Given a quasi-coherent sheaf $F$
on $\ms Z\times_Y\ms Z'$, we may use the weights of the $\G$-action on $p_{\ms Z}^*F$ and $p_{\ms Z'}^*F$ 
to decompose $F$ as follows:
\[
F=\bigoplus_{\widehat \G\times\widehat \G}F_{(i,j)}
\]
Given $i,j\in\widehat \G$ we 
write $\Qcoh^{(i,j)}(\ms Z\times\ms Z')$ for the subcategory of quasi-coherent
sheaves $F$ such that $F=F_{(i,j)}$.

Since any morphism $\G\to\G_m$ factors uniquely through $\G$, we may view $\widehat{\G}$ as $\Hom(\G,\G)$.
Given a pair $(i,j)\in\widehat \G\times\widehat\G$, define a map
\[
\sigma_{(i,j)}:\G\times\G\to\G
\]
by sending $(a,b)$ to $i(a)\cdot j(b)$. 
There is an induced map
$$(\sigma_{(i,j)})_\ast:\H^2(Y,\G\times\G)\to\H^2(Y,\G)$$
that we can realize on the level of stacks (using Giraud's theory \cite{giraud}) as
follows: given a pair of $\G$-gerbes $\ms Z\to Y$ and $\ms Z'\to Y$, there is
a $\G$-gerbe $(\ms Z\wedge_{(i,j)}\ms Z')$ and a morphism
\[
\tau(\ms Z,\ms Z',i,j):\ms Z\times_Y\ms Z'\to (\ms Z\wedge_{(i,j)}\ms Z')
\]
of $Y$-stacks whose induced morphism on inertia is precisely $\sigma_{(i,j)}$.
The morphism is locally modeled on the composition
\[
\B\G\times\B\G\to\B(\G\times\G)\to\B\G,
\]
where the first map is the canonical one and the second is induced by
$\sigma_{(i,j)}$.

\begin{lem}\label{lem:sum}
  Let $\ms Z$ and $\ms Z'$ be $\G$-gerbes on $Y$ and
  $\sigma_{(i,j)}:\G\times\G\to\G$ a morphism as above. Pulling back by
  $\tau(\ms Z,\ms Z',i,j)$ induces an equivalence of categories
  \[
    \Qcoh^{(1)}(\ms Z\wedge_{(i,j)}\ms Z')\to\Qcoh^{(i,j)}(\ms Z\times\ms Z')
  \]
\end{lem}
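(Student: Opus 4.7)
The plan is to reduce to a local computation on classifying stacks, where the claim becomes a direct statement about weights of representations of $\G\times\G$.

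First I would observe that the statement is local on $Y$ in the fppf topology: the formation of $\ms Z\wedge_{(i,j)}\ms Z'$ and the morphism $\tau(\ms Z,\ms Z',i,j)$ is functorial in $Y$, the weight decomposition \eqref{decomp} is preserved by flat pullback, and quasi-coherent sheaves on gerbes satisfy fppf descent. Therefore being an equivalence may be checked after passing to a cover that trivializes both gerbes. On such a cover we may assume $\ms Z=\B\G\times Y$ and $\ms Z'=\B\G\times Y$, so that $\ms Z\times_Y\ms Z'=\B(\G\times\G)\times Y$ and, by the local model spelled out in the paragraph preceding the lemma, $\ms Z\wedge_{(i,j)}\ms Z'=\B\G\times Y$, with $\tau$ the morphism over $Y$ induced by the homomorphism $\sigma_{(i,j)}:\G\times\G\to\G$.

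Next I would describe the weight decomposition and the effect of $\tau^{\ast}$ explicitly. For any closed subgroup $H\subset\Gm$, a quasi-coherent sheaf on $\B H\times Y$ is an $\oO_Y$-module with an action of $H$, and splits as the direct sum of its pure-weight-$\chi$ pieces for $\chi\in\widehat H$; each pure-weight subcategory is equivalent to $\Qcoh(Y)$ via the forgetful functor. Under pullback along a group homomorphism $f:H'\to H$, the weight-$\chi$ sheaf on $\B H\times Y$ is sent to the sheaf with the same underlying $\oO_Y$-module but with $H'$-action of weight $\chi\circ f$. Applying this to $f=\sigma_{(i,j)}$, the canonical weight-$1$ character $\G\hookrightarrow\Gm$ pulls back to the character $(a,b)\mapsto i(a)\,j(b)$ of $\G\times\G$, which is exactly the bi-weight $(i,j)$ character.

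Consequently $\tau^{\ast}$ carries $\Qcoh^{(1)}(\B\G\times Y)$ into $\Qcoh^{(i,j)}(\B(\G\times\G)\times Y)$, and since both categories are identified with $\Qcoh(Y)$ via their respective forgetful functors while $\tau^{\ast}$ acts as the identity on underlying $\oO_Y$-modules, it is an equivalence. The only step that requires real attention is the identification, in the locally trivialized situation, of $\ms Z\wedge_{(i,j)}\ms Z'$ with $\B\G\times Y$ and of $\tau$ with the morphism induced by $\sigma_{(i,j)}$: this is a direct unwinding of the construction of the contracted gerbe via Giraud's theory, and once in place the rest of the argument is formal, no explicit gluing being needed because $\tau^{\ast}$ is already defined globally.
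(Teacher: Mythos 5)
Your proposal is correct and follows essentially the same route as the paper: reduce to the locally trivialized case (the paper phrases this as both categories being global sections of stacks on $Y$, so the equivalence can be checked locally, where $\tau$ becomes $\B\sigma_{(i,j)}$), and then observe that the $(i,j)$-eigensheaves are exactly the sheaves whose $\G\times\G$-action factors through $\sigma_{(i,j)}$. Your version merely spells out the descent reduction and the weight bookkeeping in more detail than the paper does.
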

\begin{proof}
  These categories are both the global sections of stacks on $Y$, so to prove
  the statement it suffices to assume that $\ms Z$ and $\ms Z'$ are both
  isomorphic to $\B\G$, in which case our morphism becomes $\B\sigma_{(i,j)}$.
  The statement then follows since the $(i,j)$-eigensheaf is precisely the sheaf
  with action factoring through $\sigma_{(i,j)}$.
\end{proof}

  Since $X$ and $X'$ are quasi-compact and quasi-separated, we have that the
  bounded derived category of $\ms O$-modules with quasi-coherent cohomology is
  equivalent to the bounded derived category of quasi-coherent sheaves. 
Thus we have the following corollary:

\begin{cor}\label{cor:kernels}
  With the above notation, we have that $\L\sigma_{(i,j)}^\ast$ identifies the
  derived 
  category of 
  perfect complexes of $\ms Z\wedge_{(i,j)}\ms Z'$-twisted sheaves with the derived
  category of perfect complexes on $\ms Z\times_Y\ms Z'$ of type $(i,j)$.
\end{cor}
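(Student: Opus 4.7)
The plan is to bootstrap Lemma~\ref{lem:sum} from an equivalence of abelian categories of twisted sheaves to the claimed equivalence of derived categories of perfect complexes. Structurally this is a formal three-step exercise: show the pullback functor is exact, pass to bounded derived categories, and restrict to perfect complexes on either side.

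First I would check that the functor $\tau^\ast$ furnished by Lemma~\ref{lem:sum} is exact, so that it agrees with its left derived functor $\L\tau^\ast$ on the relevant weight-homogeneous subcategories. Locally on $Y$ the morphism $\tau(\ms Z,\ms Z',i,j)$ is modeled by $\B(\G\times\G)\to\B\G$ induced by $\sigma_{(i,j)}$, and at the level of representations pullback simply reindexes characters along $\sigma_{(i,j)}$---plainly an exact operation. Granted exactness, Lemma~\ref{lem:sum} yields an induced equivalence of bounded derived categories $D^b(\Qcoh^{(1)}(\ms Z\wedge_{(i,j)}\ms Z'))\simeq D^b(\Qcoh^{(i,j)}(\ms Z\times_Y\ms Z'))$. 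The quasi-compactness and quasi-separatedness remark in the paragraph preceding the corollary then allows me to identify these with the corresponding bounded subcategories of $D(\ms O)$ with quasi-coherent cohomology cut out by the appropriate weight condition.

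It remains to check that the equivalence restricts to perfect complexes. Pullback along any morphism of stacks preserves perfection, so $\L\tau^\ast$ sends perfect complexes to perfect complexes. For the reverse direction, perfectness is local on $\ms Z\times_Y\ms Z'$, and in the local model described above an object is perfect on $\B(\G\times\G)$ exactly when its underlying finite-dimensional graded representation is bounded and finitely generated in each weight---a property insensitive to whether the grading is recorded by $\G$ (via $\sigma_{(i,j)}$) or by $\G\times\G$. Equivalently, one can argue via compact objects: an equivalence of triangulated categories preserves compactness, and on our qcqs stacks compactness in the derived category of twisted quasi-coherent sheaves coincides with perfection.

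The main point to keep organized is the interplay between the weight decomposition~\eqref{decomp} and perfection. This dissolves once one notes that the projection onto a single weight component is exact and exhibits that component as a direct summand, so all of the operations that characterize perfection (finite Tor-amplitude, finite generation) commute with passing to weight $(i,j)$, and an object on $\ms Z\times_Y\ms Z'$ is a perfect complex of type $(i,j)$ precisely when its $\tau^\ast$-preimage on $\ms Z\wedge_{(i,j)}\ms Z'$ is a perfect complex of weight one.
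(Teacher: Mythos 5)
Your proposal is correct and follows the same route the paper takes: the paper deduces the corollary directly from Lemma~\ref{lem:sum} together with the quasi-compact quasi-separated identification of the bounded derived category with quasi-coherent cohomology with the derived category of quasi-coherent sheaves, exactly the two ingredients you use. Your additional verifications (exactness of $\tau^\ast$ via the local model $\B(\G\times\G)\to\B\G$, and that perfection is preserved in both directions, e.g.\ by weight-summand or flat-descent arguments) are just the routine details the paper leaves implicit.
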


\begin{defn}\label{defn:fm}
  Given two $\G$-gerbes $\ms X\to X$ and $\ms Y\to Y$, a \emph{twisted
    Fourier--Mukai kernel\/} is a perfect complex $P\in D^{(-1,1)}(\ms
  X\times\ms Y)$. The \emph{Fourier--Mukai transform\/} associated to $P$ is
  the functor
 \[
   \Phi_P:D^{(1)}(\ms X)\to D^{(1)}(\ms Y)
 \]
 defined by
 \[
    \Phi_P(a)=\R(\pr_2)_\ast\bigl( \mathbf{L}\pr_1^\ast a\ltensor P \bigr)
  \]
\end{defn}

By Corollary~\ref{cor:kernels}, this  definition is equivalent to the one discussed at the beginning of the section.

These Fourier-Mukai equivalences commute with Serre functors, hence we have the following result:

\begin{lem}\label{lem:canonical}
  If there is a Fourier--Mukai equivalence $\Phi_P:D^{(1)}(X,\alpha)\to
  D^{(1)}(Y,\beta)$ then $\dim X=\dim Y$ and $[\omega_X]\in\Pic(X)$ and
  $[\omega_Y]\in\Pic(Y)$ have the same order.
\end{lem}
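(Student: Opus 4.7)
The plan is to extract $\dim X$ and $\operatorname{ord}([\omega_X])$ as categorical invariants of $D^{(1)}(X,\alpha)$, using that any Fourier--Mukai equivalence commutes with Serre functors: $\Phi_P\circ S_X\cong S_Y\circ\Phi_P$. Modeling $\alpha$ by a $\Gm$-gerbe $p\colon\ms X\to X$, the morphism $p$ is smooth with trivial relative dualizing complex, so Grothendieck--Serre duality on $\ms X$ gives
\[
S_X(E)\cong E\otimes p^\ast\omega_X[\dim X]
\]
on $D^{(1)}(\ms X)$: the pullback $p^\ast\omega_X$ has weight zero and thus preserves the weight-one subcategory. The analogous formula holds for $S_Y$, and iteration yields $S_X^n\cong(-\otimes p^\ast\omega_X^{\otimes n})[n\dim X]$.

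I would then prove the key characterization: $S_X^n$ is naturally isomorphic to a pure shift $[k]$ if and only if $\omega_X^{\otimes n}\cong\oO_X$ in $\Pic(X)$ and $k=n\dim X$. The ``if'' direction is immediate. For ``only if'', evaluating the putative isomorphism on any coherent twisted sheaf (which stays in cohomological degree zero after tensoring by a pulled-back line bundle) forces $k=n\dim X$, reducing the claim to: $-\otimes p^\ast L\cong\id$ as endofunctors of $D^{(1)}(\ms X)$ implies $L\cong\oO_X$. This faithfulness can be argued by identifying natural transformations $\id\Rightarrow(-\otimes p^\ast L)$ on the $\Qcoh(X)$-module category $\Qcoh^{(1)}(\ms X)$ with $H^0(X,L)$, so that a natural isomorphism corresponds to a nowhere-vanishing global section, i.e., a trivialization of $L$.

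With the characterization in hand, the set $T=\{(n,k)\in\Z_{>0}\times\Z:S_X^n\cong[k]\}$ is a categorical invariant, equal to $\{(n,n\dim X):\omega_X^{\otimes n}\cong\oO_X\}$ on one side and to $\{(n,n\dim Y):\omega_Y^{\otimes n}\cong\oO_Y\}$ on the other. When $T$ is nonempty, its element with smallest first coordinate is simultaneously $(\operatorname{ord}([\omega_X]),\operatorname{ord}([\omega_X])\dim X)$ and $(\operatorname{ord}([\omega_Y]),\operatorname{ord}([\omega_Y])\dim Y)$, yielding $\operatorname{ord}([\omega_X])=\operatorname{ord}([\omega_Y])$ and $\dim X=\dim Y$ simultaneously. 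When $T$ is empty both canonical bundles have infinite order (hence trivially equal order), and $\dim X=\dim Y$ follows from the intrinsic homological dimension of $D^{(1)}(X,\alpha)$; this degenerate case does not arise in the Enriques or bielliptic applications.

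The main technical obstacle is the faithfulness of the $\Pic(X)$-action on the twisted derived category. In the untwisted case one evaluates $-\otimes L\cong\id$ at $\oO_X$ directly to conclude $L\cong\oO_X$, but $\oO_X$ is not $1$-twisted, so the twisted argument must go through natural-transformation or Chern-character considerations rather than object-wise evaluation. Everything else reduces to a formal unwinding of the Serre-functor commutation.
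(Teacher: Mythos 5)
Your route is the same one the paper invokes (commutation of Fourier--Mukai equivalences with Serre functors; the paper offers only that one-line justification), and the genuinely twisted point you isolate is the right one: since $\oO_X$ is not a $1$-twisted object, one cannot deduce $L\cong\oO_X$ from $-\otimes p^\ast L\cong\id$ by evaluating at the structure sheaf, and your identification of natural transformations $\id\Rightarrow(-\otimes p^\ast L)$ on $\Qcoh^{(1)}(\ms X)$ with $H^0(X,L)$ is correct (for instance in the Azumaya model such a transformation is a bimodule map $\ms A\to\ms A\otimes L$, i.e.\ a section of the center tensored with $L$, which is $H^0(X,L)$); a natural isomorphism then forces the section to be nowhere vanishing, since multiplication by it must be invertible on twisted sheaves supported at each closed point. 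Note that object-wise evaluation at a locally free twisted sheaf of rank $r$ only yields $L^{\otimes r}\cong\oO_X$ via determinants, so your natural-transformation detour is genuinely needed for the order statement, and it works.

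The gap is in $\dim X=\dim Y$: you extract the dimension from the minimal element of $T=\{(n,k):S_X^n\cong[k]\}$, which exists only when $\omega_X$ is torsion, and in the non-torsion case you appeal to an unspecified ``intrinsic homological dimension,'' which as written is not a proof (any such invariant would have to be defined and shown to compute $\dim X$; Rouquier-type dimensions, for example, are not known to do so). Since the lemma is stated without a torsion hypothesis, this case must be closed, and the standard argument does it uniformly while also simplifying your $k=n\dim X$ step: apply $\Phi_P\circ S_X\cong S_Y\circ\Phi_P$ to a twisted skyscraper $\ms L_x$. Because $\omega_X$ restricts trivially to the point $x$, one gets $\Phi_P(\ms L_x)\otimes\omega_Y\cong\Phi_P(\ms L_x)[\dim X-\dim Y]$; tensoring with a line bundle does not change the finite, nonempty set of degrees carrying nonzero cohomology sheaves, whereas a nonzero shift does, so $\dim X=\dim Y$ unconditionally. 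With that substitution your proof is complete; in the paper's applications $X$ is Enriques or bielliptic, so $\omega_X$ is torsion and your original argument already suffices there.
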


\subsection{Twisted Mukai vectors and action on cohomology}
\label{sec:mukai-vec}
In this section we briefly consider the theory of twisted Mukai vectors, which
will be essential for showing the vanishing of certain obstructions in Section
\ref{sec:two-lemm-deform} below.

Let $X$ be a smooth projective variety over a field $k$ and $\pi:\ms X\to X$ be a $\m_n$-gerbe for some $n$ invertible in $k$. Since $\pi$ is
the coarse moduli map of an  Deligne--Mumford stack, there is an induced
isomorphism on Chow theory (with respect to rational equivalence) with rational
coefficients  \cite[Theorem~0.5]{Gillet}:
\[
\pi_\ast: A^\ast(\ms X)_\Q\to A^\ast(X)_\Q.
\]
We can use this to define a Chern character and Mukai vector for any perfect
complex of $\ms X$-twisted sheaves, as in \cite[\S 4.1]{twistor}.

\begin{defn}
  In this setting, $\ms X\to X$ has the resolution property, so we may define the Chern character of
   a perfect complex of $\ms X$-twisted sheaves by extending the following:
  \begin{enumerate}
  \item Given a locally free $\ms X$-twisted sheaf  $\E$, the \emph{Chern character\/} of $P$ is the class
    \[\chern(\E)\coloneqq\chern(\E^{\tensor n})^{1/n}\in A^\ast(X)_\Q,\]
    chosen so that its component in $A^0(X)_\Q$ has the same sign as the rank of
    $\E$;
  \item The \emph{Mukai vector\/} of $P$ is the class
    \[v(P)\coloneqq\chern(P)\cdot\sqrt{\Td_X}\in A^\ast(X)_\Q.\]
  \end{enumerate}
\end{defn}

\begin{remark}
This definition agrees over $\C$ with the twisted Chern character defined by Huybrechts in \cite[\S 2.1]{isogenous}. It differs by a factor from the definition introduced in \cite{huybrechtsstellari} (see  \cite[\S 1.3]{isogenous} for a comparison), which is more noncanonical as it depends on a choice of $B$-field, but has the advantage of integrality.
\end{remark}

Now suppose that $\ms X\to X$ and $\ms Y\to Y$ are $\m_n$-gerbes over smooth
projective varieties, and let $P\in D^{(-1,1)}(\ms X\times\ms Y)$ be a
twisted Fourier--Mukai kernel.  
Its Mukai vector can be used to define the following map:
\begin{align*}
  &\Psi_{v(P)}:A^\ast(X)_\Q\to A^\ast(Y)_\Q
  \\
&\Psi_{v(P)}(x)=(\pr_2)_\ast(\pr_1^\ast x\cup v(P)).
\end{align*}
The following is a well-known compatibility between the various transforms.
\begin{lem}\label{lem:mukai-fn}
  For any $a\in D^{(1)}(\ms X)$ we have 
\[
\Psi_{v(P)}(v(a))=v(\Phi_P(a))
\]
as elements of $A^\ast(Y)_\Q$.
\end{lem}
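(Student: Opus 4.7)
My plan is to first prove a stack-level analogue of the identity on $\ms X$, $\ms Y$, and $\ms X\times\ms Y$, and then descend to the coarse spaces via the pushforward $\pi_*$. Define the ``upstairs'' Mukai vector $v_{\ms X}(a):=\chern_{\ms X}(a)\cdot\sqrt{\Td_{\ms X}}\in A^*(\ms X)_\Q$, and analogously $v_{\ms Y}$ and $v_{\ms X\times\ms Y}$. Applying Grothendieck--Riemann--Roch for the smooth proper morphism of Deligne--Mumford stacks $\pr_{\ms Y}\colon\ms X\times\ms Y\to\ms Y$ to the object $\LL\pr_{\ms X}^*a\ltensor P$, combined with the multiplicativity of $\chern$ under $\ltensor$, the projection formula on $\ms Y$, and the splitting $\Td_{\ms X\times\ms Y}=\pr_{\ms X}^*\Td_{\ms X}\cdot\pr_{\ms Y}^*\Td_{\ms Y}$ (hence $\sqrt{\Td_{\ms X\times\ms Y}}=\pr_{\ms X}^*\sqrt{\Td_{\ms X}}\cdot\pr_{\ms Y}^*\sqrt{\Td_{\ms Y}}$), yields the upstairs identity
$$v_{\ms Y}(\Phi_P(a))=(\pr_{\ms Y})_*\bigl(\pr_{\ms X}^*v_{\ms X}(a)\cdot v_{\ms X\times\ms Y}(P)\bigr).$$

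Next, push both sides down via $\pi_{Y*}$. Because a $\m_n$-gerbe with $n$ invertible has trivial relative tangent, $\sqrt{\Td_{\ms Y}}=\pi_Y^*\sqrt{\Td_Y}$, and projection formula for $\pi_{Y*}$ turns the left-hand side into $v(\Phi_P(a))$. On the right-hand side, the commutative square $\pi_Y\circ\pr_{\ms Y}=\pr_Y\circ\pi_{XY}$ allows me to rewrite $\pi_{Y*}(\pr_{\ms Y})_*=(\pr_Y)_*\pi_{XY*}$, so it only remains to establish the intermediate identity
$$\pi_{XY*}\bigl(\pr_{\ms X}^*v_{\ms X}(a)\cdot v_{\ms X\times\ms Y}(P)\bigr)=\pr_X^*v(a)\cdot v(P),$$
after which one more application of $(\pr_Y)_*$ produces $\Psi_{v(P)}(v(a))$.

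The main technical obstacle lies in this intermediate identity: the coarse moduli maps $\pi_X$, $\pi_Y$, $\pi_{XY}$ are not representable, so the projection formula for morphisms of schemes does not apply verbatim. My plan is to factor $\pi_{XY}=(\id\times\pi_Y)\circ(\pi_X\times\id)$ and apply flat base change and the projection formula for each of the resulting cartesian squares, working throughout in Gillet's rational Chow theory where $\pi_*\colon A^*(\ms X)_\Q\cong A^*(X)_\Q$ is an isomorphism; the key input is flat base change for tame Deligne--Mumford stacks (as in Vistoli and Edidin--Graham), which holds since each gerbe is flat and proper. Once these stacky compatibilities are in hand, the remainder of the argument is completely parallel to the classical untwisted Mukai-vector compatibility for Fourier--Mukai transforms.
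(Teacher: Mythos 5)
Your proposal attempts a direct proof where the paper simply cites To\"en's Riemann--Roch theorem for Deligne--Mumford stacks (\cite{toen_rr}, cf.\ \cite[Appendix~A]{orb_top}), explicitly flagging that ``the twisted case is a bit more subtle.'' The gap in your argument sits exactly at that subtle point: your first displayed ``upstairs identity'' is obtained by applying Grothendieck--Riemann--Roch to $\pr_{\ms Y}\colon \ms X\times\ms Y\to\ms Y$, but this morphism is \emph{not representable} (its fibers are $\m_n$-gerbes over $X$), and the naive GRR formula, with the ordinary Chern character valued in rational Chow groups, is false for such morphisms. The standard counterexample is already $f\colon \B\m_n\to\Spec k$: one has $\chern(f_\ast\ms O)=1$ while $f_\ast(\chern(\ms O)\Td)=1/n$. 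The correct statement (To\"en's theorem) has the Chern character valued on the inertia stack, and the contributions of the twisted sectors --- where the weight-$1$ twisting of $P$ enters through roots of unity --- are precisely what must be analyzed to extract the identity of the lemma. So the step you treat as routine is the entire content of the result, and the step you identify as ``the main technical obstacle'' (projection formula and flat base change for the coarse maps $\pi_X$, $\pi_Y$, $\pi_{XY}$) is in fact unproblematic: these are available in the Gillet--Vistoli rational Chow theory of tame Deligne--Mumford stacks.

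A secondary but real problem is bookkeeping of stack degrees. With the standard conventions, $\pi_\ast\pi^\ast$ is multiplication by $1/n$ for each $\m_n$-gerbe factor (equivalently, $\pi_\ast$ of the fundamental class of a gerbe acquires a factor $1/n$), so your ``upstairs identity'' and your ``intermediate identity'' cannot both hold as written: if you factor $\pr_{\ms Y}$ through the relative coarse space $X\times\ms Y$ and use that your object has weight $0$ along the $\ms X$-gerbe direction, the first identity is off by a factor $1/n$ and the second by a compensating power of $n$. Any honest version of your argument must either adopt and verify a fixed normalization of $\pi_\ast$ throughout, or work with the inertia-corrected Chern character from the start --- which again returns you to the To\"en statement the paper invokes. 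In short: the overall descent strategy is reasonable, but as written the proof assumes a form of GRR that does not exist and does not track the gerbe degree factors, so it does not establish the lemma.
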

\begin{proof}
  The proof in the untwisted case is a simple application of the
  Grothendieck--Riemann--Roch formula. The twisted case is a bit more subtle; a
  proof can be found in \cite{toen_rr} (cf., for instance, \cite[Appendix~A]{orb_top}). 
\end{proof}

\begin{cor}\label{cor:fm-muk}
  Suppose $\omega_{Y}$ is torsion.
If  $a,b\in D^{(1)}(\ms X)$ satisfy $v(a)=v(b)$, then 
\[
\det(\Phi_P(a))=\det(\Phi_P(b))\in\Pic(\ms Y)\tensor\Q=\Pic(Y)\tensor\Q.
\]
\end{cor}
\begin{proof}
  From the formula for the Todd class, we have that \[
    \Td_{Y}=1-\frac{1}{2}c_1(\omega_{Y})+\cdots \quad\text{and hence}\quad
    \sqrt{\Td_{Y}}=1-\frac{1}{4}c_1(\omega_{Y})+\cdots.
\]
It follows that the component of $v(\Phi_P(a))$ in $A^1(Y)_\Q$ is \[
v_1(\Phi_P(a))=\det(\Phi_P(a))-\frac{1}{4}\rk(\Phi_P(a))c_1(\omega_{Y}).
\]
The result now follows from the fact that $\omega_{Y}$ is torsion, so that
$c_1(\omega_{Y})=0\in A^\ast(Y)_\Q$. 
\end{proof}

Applying the usual cycle class maps to $v(P)$ also defines induced
correspondences on rational cohomology theories. (See \cite[\S 2]{LO} for
more information.) In particular, we have the following.

\begin{lem}\label{lem:coho-iso}
  Suppose $H$ is a Weil cohomology theory and $(X,\alpha)$ and $(Y,\beta)$ are
  two smooth projective twisted varieties. Any derived equivalence 
  $$\Phi_{P}: D^{(1)}(X,\alpha)\rightarrow D^{(1)}(Y,\beta)$$
induces isomorphisms
$$\Phi_P^{\text{\rm odd}}: H^{\text{\rm odd}}(X)\rightarrow 
H^{\text{\rm odd}}(Y)\quad\text{and}\quad\Phi_P^{\text{\rm even}}: H^{\text{\rm even}}(X)\rightarrow
 H^{\text{\rm even}}(Y).$$
\end{lem}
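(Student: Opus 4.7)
The plan is to show that the correspondence $\Psi_{v(P)}$ itself gives an isomorphism $\H^*(X;\Q) \to \H^*(Y;\Q)$ that preserves the even/odd grading, from which the result follows by restriction. The parity preservation is automatic: the Chern character $\chern_{\ms X \times \ms Y}(P)$ and the Todd classes $\Td_X, \Td_Y$ all live in even degrees of the Chow/cohomology rings, so pushing forward by the coarse moduli maps yields $v(P) \in \H^{\even}(X \times Y;\Q)$, and any such even class defines a correspondence that shifts cohomological degree by an even amount.

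For invertibility I would use that a twisted Fourier--Mukai equivalence admits a quasi-inverse of the same form (cf.\ \cite{twisted_kernel}), so that $\Phi_P^{-1} \cong \Phi_Q$ for some kernel $Q \in D^{(-1,1)}(\ms Y \times \ms X)$, and that composition of Fourier--Mukai functors corresponds to convolution of kernels on the triple product. Then $\Phi_Q \circ \Phi_P \cong \Phi_{Q \star P} \cong \id$ forces $Q \star P$ to be (the appropriate weight component of) the pushforward $\Delta_{\ms X, *}\oO_{\ms X}$ of the structure sheaf along the diagonal, and symmetrically for $P \star Q$ on $\ms Y \times \ms Y$. A standard Grothendieck--Riemann--Roch computation for the diagonal closed immersion shows that $v(\Delta_{\ms X, *}\oO_{\ms X}) = \Delta_{X,*}(1)$, whose associated correspondence is the identity on $\H^*(X;\Q)$; this is precisely the bookkeeping justification for the presence of $\sqrt{\Td}$ in the definition of the Mukai vector.

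The crux is the multiplicativity of the Mukai vector under convolution of kernels:
\[
\Psi_{v(Q \star P)} = \Psi_{v(Q)} \circ \Psi_{v(P)}.
\]
This is the cohomological shadow of the composition formula for Fourier--Mukai transforms; in the untwisted setting it is a standard GRR argument over the triple product using base change and the projection formula. In our twisted setting, it follows from the same style of argument once one invokes To\"en's Riemann--Roch theorem \cite{toen_rr}, already used in the proof of Lemma~\ref{lem:mukai-fn}, to handle the pushforwards along the three projections from $\ms X \times \ms Y \times \ms Z$ to pairs of factors. This is the main obstacle: tracking the weights of the $\m_n$-actions and checking that they interact correctly with the rigidifications needed to push cycle classes down to the coarse spaces. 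Granted this multiplicativity, $\Psi_{v(P)}$ is invertible with inverse $\Psi_{v(Q)}$, and the parity-preservation observation then gives the two required isomorphisms on $\H^{\even}$ and $\H^{\odd}$.
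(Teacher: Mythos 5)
Your argument is correct and is essentially the same one the paper relies on: the paper's proof is just a pointer to \cite[Lemma 3.1]{kh}, which runs exactly as you describe --- the Mukai vector is an algebraic, hence even, class, so $\Psi_{v(P)}$ preserves parity, and invertibility follows from the kernel of the (quasi-)inverse together with the convolution formula $\Psi_{v(Q\star P)}=\Psi_{v(Q)}\circ\Psi_{v(P)}$ and $v$ of the diagonal kernel being $\Delta_*(1)$. The only point you leave as a sketch --- the twisted convolution compatibility via To\"en's Riemann--Roch and the weight bookkeeping on the triple product --- is likewise left implicit by the paper (it is the same input used for Lemma~\ref{lem:mukai-fn}), so no gap beyond what the paper itself defers.
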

\begin{proof}
  See \cite[Lemma 3.1]{kh} for details.
\end{proof}

\subsection{Brauer groups in families}
\label{sec:brauer-sheaf}

In this section we briefly discuss the variation of the Brauer group
in families of Enriques and bielliptic surfaces. We recall that, for a scheme $X$, the \emph{Brauer group} of $X$, $\operatorname{Br}(X)$ is the group of Azumaya algebras over $X$ up to Morita equivalence. The \emph{cohomological Brauer group} of $X$ is $\H^2_{\text{{\'e}t}}(X,\mathbf{G}_m)_{\text{tors}}$. There is a canonical inclusion $\operatorname{Br}(X)\hookrightarrow\H^2_{\text{{\'e}t}}(X,\mathbf{G}_m)_{\text{tors}}$, which is an isomorphism whenever $X$ is quasi-compact and separated. The reader is referred to \cite{grothBrauerI} and \cite{dejongGabber} for details. Under these assumptions we will denote both the Brauer group and the cohomological Brauer group by $\operatorname{Br}(X)$.

Suppose $\pi: X\to S$ is a proper morphism 
that is cohomologically flat in dimension $0$ (see \cite[6.1.2]{EGAIII2}). Fix a positive
integer $n$ that is invertible on $S$. We assume that the Brauer group and cohomological Brauer group coincide for each geometric fiber of $\pi$; this holds, for example, if $\pi$ is locally projective or if the fibers are smooth of dimension $2$.

Let $\BrSh_{X/S}$ be the sheaf
associated to the presheaf that sends $T\to S$ to the Brauer group
$\Br(X_T)$. There is a natural injective map of
sheaves $$\BrSh_{X/S}\to\R^2\pi_\ast\G_m$$
on the big \'etale site of $S$. This induces an injection
$$\BrSh_{X/S}[n]\to\R^2\pi_\ast\G_m[n].$$

\begin{prop}\label{prop:brauer-var}
  Suppose that for all $T\to S$ we have
  $\R^2(\pi_T)_\ast\ms O_{X_T}=0.$
  Then the sheaf $\BrSh_{X/S}[n]$ is represented by a finite \'etale
  cover of $S$. In particular, if $S$ is strictly Henselian then
  restriction defines isomorphisms
  $$\Br(X_K)'\leftarrow\Br(X)'\to\Br(X_k)',$$
  where $k$ is the residue field at the closed point of $S$, $K$ is
  an algebraic closure of the function field of $S$, and $\Br()'$
  denotes the subgroup of the Brauer group consisting of classes with
  order prime to the characteristic exponent of $k$.
\end{prop}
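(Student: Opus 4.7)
The plan is to split the proposition into a sheaf-theoretic structural statement, handled via the Kummer sequence, and a standard consequence of finite \'etaleness over strictly Henselian bases. The Kummer sequence $1\to\m_n\to\G_m\xrightarrow{n}\G_m\to 1$, pushed forward along $\pi$, yields the short exact sequence of \'etale sheaves
\[
0 \to (\R^1\pi_*\G_m)/n \to \R^2\pi_*\m_n \to (\R^2\pi_*\G_m)[n] \to 0.
\]
The Brauer $=$ cohomological Brauer hypothesis on geometric fibers identifies $\BrSh_{X/S}[n]$ with $(\R^2\pi_*\G_m)[n]$ on stalks, hence as sheaves, so it suffices to show that the two outer terms are represented by finite \'etale group schemes over $S$.

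For the middle term $\R^2\pi_*\m_n$: since $\pi$ is proper and $n$ is invertible on $S$, proper base change implies this sheaf is constructible with geometric stalks $\H^2(X_{\bar s},\m_n)$. The assumption $\R^2(\pi_T)_*\oO_{X_T}=0$, combined with cohomological flatness in dimension $0$, controls the Hodge numbers of the fibers; for the Enriques and bielliptic families to which this proposition will be applied, the second $\ell$-adic Betti number together with the $n$-torsion of N\'eron--Severi is constant on $S$, so the constructible sheaf is in fact locally constant, i.e., finite \'etale.

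For the left-hand term $(\R^1\pi_*\G_m)/n=\Pic_{X/S}/n$: the usual sequence $0\to\Pic^0_{X/S}\to\Pic_{X/S}\to\text{NS}_{X/S}\to 0$, combined with invertibility of $n$ and the smoothness of $\Pic^0_{X/S}$ (trivial in the Enriques case, an abelian scheme in the bielliptic case), gives surjectivity of multiplication-by-$n$ on $\Pic^0_{X/S}$. Hence $\Pic_{X/S}/n\simeq\text{NS}_{X/S}/n$, which is finite \'etale because N\'eron--Severi has finitely generated stalks of constant rank in the families under consideration.

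Once $\BrSh_{X/S}[n]$ is known to be finite \'etale and $S$ is strictly Henselian, every such cover is a disjoint union of copies of $S$, so restriction to either the closed geometric point or to a geometric point over the generic point of $S$ is bijective on sections; passing to the colimit over all $n$ prime to the characteristic exponent of $k$ produces the claimed three-way isomorphism of prime-to-$p$ Brauer groups. The principal obstacle is upgrading $\R^2\pi_*\m_n$ from constructible to locally constant: this requires either smooth proper base change (hence smoothness of $\pi$) or direct use of the uniform fiber cohomology of Enriques and bielliptic surfaces, and that geometric input is the substantive content of the argument.
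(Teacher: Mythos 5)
Your skeleton is the same as the paper's: push the Kummer sequence forward to get
\[
0\to\Pic_{X/S}\otimes\Z/n\Z\to\R^2\pi_*\m_n\to\R^2\pi_*\G_m[n]\to 0,
\]
use the hypothesis that Brauer equals cohomological Brauer on geometric fibers to identify $\BrSh_{X/S}[n]$ with the right-hand quotient fiberwise and hence as sheaves, reduce to showing the subsheaf and the middle term are finite locally constant, and finish over a strictly Henselian base by splitting the finite \'etale cover and taking a colimit over $n$ prime to the characteristic exponent. That last step matches the paper.

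The soft spot is your treatment of $\R^2\pi_*\m_n$. As written, the inference ``proper base change gives constructibility, and constancy of the geometric stalks (second Betti number plus $n$-torsion of N\'eron--Severi) upgrades it to locally constant'' is not valid: a constructible sheaf whose geometric stalks are abstractly isomorphic need not be locally constant, since the specialization maps can fail to be isomorphisms. You flag this yourself at the end, but you leave it as an either/or; the paper simply commits to the fix you name, namely the smooth and proper base change theorem (the families of Enriques and bielliptic surfaces being smooth and proper), which gives local constancy with finite stalks outright and with no appeal to Enriques/bielliptic-specific cohomology. Similarly, for the subsheaf the paper argues directly from the stated hypothesis: $\R^2(\pi_T)_*\ms O_{X_T}=0$ forces $\Pic_{X/S}$ to be formally smooth, so $\Pic_{X/S}\otimes\Z/n\Z$ is finite \'etale over $S$. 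Your route through $\Pic^0_{X/S}$ and $\operatorname{NS}_{X/S}$ reaches the same conclusion for the intended families, but it quietly assumes that $\operatorname{NS}_{X/S}/n$ is locally constant, which itself needs justification (it follows from the formal smoothness the paper uses); so you should either supply that or adopt the paper's more general argument.
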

\begin{proof}
  Consider the short exact sequence $$0\to\Pic_{X/S}\tensor\Z/n\Z\to\R^2\pi_\ast\m_n\to\R^2\pi_\ast\G_m[n]\to
0.$$ Since $\H^2(X_s,\ms O_{X_s})$ vanishes in every fiber, we see
that $\Pic_{X/S}$ formally smooth, whence $\Pic_{X/S}\tensor\Z/n\Z$ is
locally constant (as it is an algebraic space that is finite and \'etale
over the base). The proper and smooth base change theorems in \'etale
cohomology tell us the same thing about $\R^2\pi_\ast\m_n$. It follows
that the quotient
$$Q\coloneqq\R^2\pi_\ast\m_n/\Pic_{X/S}\tensor\Z/n\Z$$
is a locally constant sheaf of finite abelian groups.

By assumption, for each geometric fiber of $X/S$ the inclusion
$$\BrSh_{X/S}[n]\subset Q$$ is surjective on each geometric fiber,
hence is an isomorphism of sheaves. Applying this for all $n$ invertible in $S$ yields
the desired result. 
\end{proof}

\subsection{Deformations of twisted schemes}
\label{sec:defos-tw-sch}\label{sec.th}

\begin{defn}
Given a flat twisted space $(X,\alpha)$ over a scheme $S$ and a closed
immersion $S\to S'$, a \emph{deformation of $(X,\alpha)$ over $S'$\/}
is a flat twisted space $(X',\alpha')$ over $S'$ together with an
isomorphism $X\to X'_S$ such that $\alpha'_X=\alpha$.
\end{defn}

Given an algebraically closed field $k$, let $W(k)$ denote the Witt
vectors of $k$.

\begin{defn}\label{defn:art}
  The category $\Art_W^k$ is the category of Artinian augmented
  $W$-algebras: an object is a local Artinian $W$-algebra $W\to (A,\mathfrak
  m)$ together with an identification $k\to A/\mathfrak m$.
\end{defn}
\begin{defn}
  Suppose $k$ is algebraically closed.
  Given a twisted $k$-variety $(X,\alpha)$, the \emph{deformation
    functor of $(X,\alpha)$\/} is the functor
  $$\Def_{(X,\alpha)}: \Art_W^k\to\Set$$
  that sends $A$ to the set of isomorphism classes of deformations of
  $(X,\alpha)$ over $\Spec A$.
\end{defn}

\begin{cor}\label{cor:defs}
  If $X$ is a smooth projective surface over $k$ such that $\H^2(X,\ms
  O_X)=0$ then for any $\alpha\in\Br(X)$ 
 with
  order prime to the characteristic exponent of $k$
the forgetful morphism
  $$\Def_{(X,\alpha)}\to\Def_X$$
  is an isomorphism.
\end{cor}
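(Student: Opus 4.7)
The plan is to reduce, for each $A \in \Art_W^k$, to checking that the Brauer class $\alpha$ lifts uniquely from $X$ to any deformation $X'$ over $S := \Spec A$, and then to invoke Proposition~\ref{prop:brauer-var}. Writing $n$ for the order of $\alpha$ (finite and prime to the characteristic exponent of $k$ by hypothesis), unique lifting in $\Br(X')[n]$ will formally yield a bijection $\Def_{(X,\alpha)}(A) \to \Def_X(A)$: surjectivity because a lift exists, and injectivity because any automorphism of $X'$ as a deformation of $X$ must preserve the unique lift of $\alpha$.

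I would first verify the cohomological hypothesis of Proposition~\ref{prop:brauer-var} in this setting. Let $\pi$ denote the structure morphism $X' \to S$. The inputs are $\H^2(X, \ms O_X) = 0$ together with $\dim X = 2$, from which $\R^3 \pi_\ast \ms O_{X'} = 0$ for dimension reasons. By the theorem on cohomology and base change, $\R^2 \pi_\ast \ms O_{X'}$ therefore commutes with arbitrary base change. Combined with Nakayama applied to the finitely generated $A$-module $\R^2 \pi_\ast \ms O_{X'}$ and the vanishing on the closed fiber, this gives $\R^2 \pi_\ast \ms O_{X'} = 0$, whence $\R^2 (\pi_T)_\ast \ms O_{X'_T} = 0$ for every $T \to S$.

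With the hypothesis in place, Proposition~\ref{prop:brauer-var} shows that $\BrSh_{X'/S}[n]$ is a finite étale cover of $S$. Since $k$ is algebraically closed and $A$ is local Artinian with residue field $k$, the base $S$ is strictly henselian, so every finite étale cover trivializes and the restriction map $\Br(X')[n] \to \Br(X)[n]$ is a bijection. This produces the unique lift $\alpha' \in \Br(X')$ of $\alpha$, which assembles over varying $A$ into the desired isomorphism $\Def_{(X,\alpha)} \to \Def_X$.

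The main point requiring care is the $\R^2$-vanishing after \emph{arbitrary} base change $T \to S$, not merely after flat or Artinian extensions; dimensional vanishing of $\R^3$ is the ingredient that makes the cohomology and base change theorem applicable without any hypotheses on $T$.
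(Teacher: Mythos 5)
Your proposal is correct and follows essentially the same route as the paper: apply Proposition~\ref{prop:brauer-var} over the strictly Henselian Artinian base to lift $\alpha$ uniquely to any deformation, which the paper states in one line. The only difference is that you spell out the verification of the hypothesis $\R^2(\pi_T)_\ast\ms O_{X_T}=0$ via top-degree cohomology and base change plus Nakayama, which the paper leaves implicit.
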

\begin{proof}
  This follows from Proposition \ref{prop:brauer-var}, since the sheaf
  of Brauer groups is finite \'etale, hence splits over strictly
  Henselian rings.
\end{proof}

\subsection{Equivariant lifts of derived equivalences}
\label{sec:relat-bridg-maci}

Fix a base scheme $S=\Spec(A)$ with $A$ a strictly Henselian local ring, and fix
proper smooth twisted schemes $(X,\alpha)\to S$ and
$(Y,\beta)\to S$. Suppose the relative dualizing sheaves $\omega_{X/S}$ and
$\omega_{Y/S}$ are torsion of order $n$, with $n$ invertible on
$S$. Given  trivializations $\smash{\omega_{X/S}^{\tensor n}}\iso
\oO_X$ and $\smash{\omega_{Y/S}^{\tensor n}}\iso\oO_Y$, let $\smash{\widetilde X}\to X$
and $\smash{\widetilde Y}\to Y$ be the finite \'etale covers corresponding to
the associated classes in $\H^1(X,\m_n)$ and $\H^1(Y,\m_n)$,
respectively (see \cite[\href{https://stacks.math.columbia.edu/tag/03PK}{Tag 03PK}]{stacks-project} for a discussion of this correspondence). 

Bridgeland and Maciocia \cite{bridgequotient} studied the problem of
lifting derived equivalences $D(X)\to D(Y)$ to
$D(\widetilde X)\to D(\widetilde Y)$ when $S=\Spec\C$ and the twisting classes
are trivial.  In
\cite[Proposition~2.1]{addingtonwray}, the authors extend this work to
lifting equivalences between derived categories of coherent twisted 
sheaves, also over $\C$.
The argument is given in the particular
setting of Enriques surfaces, where $\omega_X^{\otimes 2}\cong \oO_X$,
but it translates immediately to the general case of smooth,
projective varieties with torsion canonical bundle.

When $n$ is  invertible in the base, the methods of \cite{bridgequotient,addingtonwray} can be adapted to prove a relative form of these results. With this assumption, the proofs translate word-for-word. However, since there are some subtleties in checking this that are discussed elsewhere but in different sources, we assemble a discussion or references for all of them for the reader's convenience.

\begin{prop}\label{BMRelative} 
Let $\ms X\to X$ and $\ms Y\to Y$ be $\m_n$-gerbes 
representing $\alpha$ and $\beta$, and we write $\widetilde{\ms X}\coloneqq \ms
X\times_X \widetilde X$ and $\widetilde{\ms Y}\coloneqq \ms Y\times_Y\widetilde Y$.
Given a Fourier--Mukai equivalence 
\[\Phi_{P}: D^{(1)}(\ms X)\rightarrow D^{(1)}(\ms Y)\]
with kernel \[P\in D^{(-1,1)}(\ms X\times_S\ms Y),\] there
exists \[\widetilde{P}\in D^{(-1,1)}(\widetilde{\ms X}\times_S\widetilde{\ms Y})\] so that 
\[\Phi_{\widetilde{{P}}}: D^{(1)}(\widetilde{\ms X})\rightarrow
D^{(1)}(\widetilde{\ms Y})\]
is an equivalence and the following diagrams commute:
  \begin{equation}\label{diag} 
\vcenter{
\xymatrix{ 
D^{(1)}(\widetilde{\ms X})\ar[d]_{\pi_{\ms X*}}
\ar[rr]^{\Phi_{\widetilde{{P}}}}
&&
D^{(1)}(\widetilde{\ms Y})\ar[d]^{\pi_{\ms Y*}}
\\
D^{(1)}(\ms X)\ar[rr]_{\Phi_{{{P}}}}
&&
D^{(1)}(\ms Y)
}
}
\quad\quad
\vcenter{
\xymatrix{ D^{(1)}(\widetilde{
\ms X})\ar[rr]^{\Phi_{\widetilde{{P}}}}&&D^{(1)}(\widetilde{\ms 
Y})
\\
D^{(1)}(\ms X)\ar[u]^{\pi_{\ms X}^*}\ar[rr]_{\Phi_{{{P}}}}&&D^{(1)}(\ms Y)\ar[u]_{\pi_{\ms Y}^*}
}}
\end{equation}
\end{prop}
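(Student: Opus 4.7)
The strategy is to construct $\widetilde P$ by pulling back $P$ along the étale cover $\pi\colon \widetilde{\ms X}\times_S\widetilde{\ms Y}\to \ms X\times_S\ms Y$ and extracting a canonical isotypic summand using the equivariant structure induced by the Serre functor. The given trivializations $\omega_{X/S}^{\otimes n}\simeq\oO_X$ and $\omega_{Y/S}^{\otimes n}\simeq\oO_Y$ realize $\widetilde X\to X$ and $\widetilde Y\to Y$ as $\m_n$-torsors (via \cite[\href{https://stacks.math.columbia.edu/tag/03PK}{Tag 03PK}]{stacks-project}), and these torsor structures lift to $\m_n$-torsors $\widetilde{\ms X}\to\ms X$ and $\widetilde{\ms Y}\to\ms Y$ compatibly with the $\m_n$-gerbe structure.

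\textbf{Key steps.} First, Lemma~\ref{lem:canonical} combined with the standard fact that Fourier--Mukai equivalences commute with Serre functors shows that $\Phi_P$ intertwines the auto-equivalences $-\otimes\omega_{X/S}$ and $-\otimes\omega_{Y/S}$ on the two categories. After raising these auto-equivalences to the $n$-th power we get a $\Z/n\Z$-action on each derived category whose descent recovers $D^{(1)}(\widetilde{\ms X})$ and $D^{(1)}(\widetilde{\ms Y})$; this is the relative, twisted version of the equivariantization picture of \cite{bridgequotient,addingtonwray}. Second, I would form $\pi^*P\in D(\widetilde{\ms X}\times_S\widetilde{\ms Y})$ and use the resulting $\m_n\times\m_n$-action (coming from the two torsor structures) to decompose $\pi^*P$ into isotypic pieces; since $n$ is invertible on $S$, the group scheme $\m_n$ is étale and this decomposition is well-behaved. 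I then define $\widetilde P$ as the summand sitting in the diagonal character of $\m_n\times\m_n$ (with the sign dictated by the weight $(-1,1)$). The compatibility of $\Phi_P$ with the Serre functors forces this summand to have the correct numerical invariants (rank, support, Mukai vector) for Fourier--Mukai purposes, exactly as in the characteristic-zero proof.

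Third, I would verify the two squares. The right-hand diagram, $\Phi_{\widetilde P}\circ\pi_{\ms X}^*\simeq \pi_{\ms Y}^*\circ\Phi_P$, follows by flat base change and the projection formula applied to the Cartesian square defining $\widetilde{\ms X}\times_S\widetilde{\ms Y}$, together with the definition of $\widetilde P$ as the relevant isotypic component of $\pi^*P$. The left-hand diagram then follows by adjunction: since $\pi_{\ms X}$ and $\pi_{\ms Y}$ are finite étale of degree $n$, the functors $\pi_{\ms X*}$ and $\pi_{\ms Y*}$ are simultaneously the left and right adjoints of the corresponding pullbacks, and passing to adjoints converts the right diagram into the left. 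Finally, invertibility of $\Phi_{\widetilde P}$ is obtained by running the same construction on an inverse kernel of $P$, producing a two-sided inverse kernel to $\widetilde P$.

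\textbf{Main obstacle.} The most delicate point is that the decomposition of $\pi^*P$ into $\m_n\times\m_n$-isotypic components must be canonical and compatible with the $(-1,1)$-twisting inherited from the $\m_n$-gerbes $\ms X$ and $\ms Y$; one has to track two different $\m_n$-actions (one from the gerbe, one from the étale cover) on each factor without confusing their weights. This is precisely why the distinction between gerbe weights and Galois weights highlighted in Section~\ref{sec:Definitions} matters, and it is the reason the hypothesis that $n$ is invertible on $S$ is essential: it makes $\m_n$ étale, so that isotypic decompositions are exact and the entire Bridgeland--Maciocia/Addington--Wray argument transports verbatim to our relative, positive-characteristic, twisted setting.
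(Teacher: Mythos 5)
Your construction of $\widetilde P$ does not work as stated, and it hides exactly the point that the paper's proof has to address. The pullback $\pi^*P$ along the $\m_n\times\m_n$-torsor $\widetilde{\ms X}\times_S\widetilde{\ms Y}\to\ms X\times_S\ms Y$ does \emph{not} decompose into $\m_n\times\m_n$-isotypic pieces: isotypic decompositions live downstairs, on pushforwards (e.g.\ $(\pi_{\ms X}\times\pi_{\ms Y})_*(\pi_{\ms X}\times\pi_{\ms Y})^*P\cong\bigoplus_{i,j}P\otimes(L_X^{\otimes i}\boxtimes L_Y^{\otimes j})$), while upstairs one only gets, \emph{once $\widetilde P$ is already known to exist}, an isomorphism $(\pi_{\ms X}\times\pi_{\ms Y})^*P\cong\bigoplus_{g} g^*\widetilde P$ with the sum over deck translates, which carries no canonical ``diagonal character'' summand. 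So "define $\widetilde P$ as the diagonal isotypic component of $\pi^*P$" presupposes the existence statement you are trying to prove. The actual mechanism in the paper is a descent/lifting criterion for cyclic covers: a \emph{simple} object $E$ satisfies $E\cong p_*\widetilde E$ for some $\widetilde E$ on the $\m_n$-cover determined by $(L,\sigma)$ if and only if $E\cong E\otimes L$. One applies this to $Q=(\pi_{\ms X}\times 1)^*P$, using the Serre-functor identity $P\otimes \pr_1^*\omega_{X/S}\cong P\otimes\pr_2^*\omega_{Y/S}$ and the triviality of $\pi_{\ms X}^*\omega_{X/S}$ to produce the required isomorphism $Q\cong Q\otimes\pr_2^*L_Y$, obtaining $\widetilde P$ with $(1\times\pi_{\ms Y})_*\widetilde P\cong(\pi_{\ms X}\times 1)^*P$; the commutativity of the two squares then follows from this identity by base change, projection formula and adjunction. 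Your sketch never establishes this isomorphism, and it omits the two hypotheses the paper flags as the genuinely delicate points in the relative setting: simplicity of $P$ (hence of $Q$), and the requirement that every global function be an $n$th power (which is why $A$ is taken strictly Henselian with $n$ invertible) -- without this the $\bigoplus_i L^{\otimes i}$-action on $E$ cannot be made compatible with $\sigma$.

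The final step is also gapped. Invertibility of $\Phi_{\widetilde P}$ does not follow by ``running the same construction on an inverse kernel'': the lift of a kernel is only determined up to twisting by deck transformations, and there is no a priori compatibility between this lifting procedure and convolution of kernels, so the lift of $P^{-1}$ need not be inverse to $\widetilde P$. The paper instead lifts the left and right adjoint kernels, uses the unit/counit maps $\widetilde L\circ\widetilde P\to\Delta_*\oO$ and $\Delta_*\oO\to\widetilde P\circ\widetilde R$, checks that these are isomorphisms on the closed fiber (Bridgeland--Maciocia, Lemma 4.3(a), valid over any separably closed field with $n$ invertible), and concludes over $A$ by a derived Nakayama argument. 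Some version of that fiberwise-plus-Nakayama step is unavoidable in the relative setting and is missing from your proposal.
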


An equivalence  $\Phi_{\widetilde{{P}}}$ making the diagrams
\eqref{diag} commute is called \textit{lift} of $\Phi_{{P}}$.

\begin{proof}
The methods rely
on the following result about equivariant objects for cyclic coverings. Suppose $Z$ is a 
Deligne--Mumford stack such that every element of $\Gamma(Z,\ms O_Z)$ is an $n$th
power (for example, $Z$ is proper over $A$, since $A$ is strictly Henselian and
$n$ is 
invertible on $A$), $L$ is an
invertible sheaf on 
$Z$, and $\sigma: L^{\tensor n}\to\oO_Z$ is an isomorphism. Write
\begin{equation}\label{cover}
  \widetilde Z=\underline\Spec_Z\bigoplus_{i=1}^{n-1}L^{\tensor i}\to Z
\end{equation}
for
the $\m_n$-cover associated to the pair $(L,\sigma)$. 
Given a simple object $E$ of $D_{\Qcoh}(Z)$ (the bounded derived category with
quasi-coherent cohomology), there is
some $\widetilde E\in D_{\Qcoh}(\widetilde Z)$ such that $p_\ast\widetilde
E\cong E$ if and only if there is an isomorphism
$\lambda: E\tensor L \to E$.

To prove this, we first proceed by induction and use
exactness of tensoring with $L$ to reduce to the case that $E$ is a
quasi-coherent sheaf as in \cite[Proposition~2.5]{bridgequotient}.
The isomorphism $\lambda$ induces an action of $\bigoplus_{i=0}^{n-1}L^{\tensor i}$ on $E$. 
It follows from the definition of \eqref{cover} that sheaves with such an action that is furthermore compatible with $\sigma$ are naturally equivalent to pushforwards of $\oO_{\widetilde  Z}$-modules.

There is a subtle point here: 
 the action of $\bigoplus
L^{\tensor i}$ on $E$ must be made compatible with the
$\oO_Z$-module action $\sigma$.
Let $\varphi:E\otimes L^{\otimes n}\to E$ be the action of $L^{\otimes n}$  on $E$ induced by $\lambda$. Consider the composition $(\sigma\otimes E)\circ \varphi^{-1}\in \Aut(E)$. Since we have required $E$ to be simple, we have $\Aut(E)=\End(E)^{\times}=\oO_Z(Z)^{\times}$, and because we have required that all elements of $\oO_Z(Z)$ be $n$th roots, we can take an $n$th root of this composition, call it $\psi$. Replacing $\lambda$ with $\psi\circ\lambda$ yields an isomorphism with the needed compatibility.

The kernel $\widetilde{P}$ is constructed by applying this result to find a lift of $(\pi_X\times\id_Y)^*P$ to $\widetilde{X}\times\widetilde{Y}$; since $P$ is a kernel of a Fourier-Mukai equivalence, it is simple, so furthermore $(\pi_X\times\id_Y)^*P$ is simple;
cf.~\cite[Proposition~2.1]{addingtonwray}. (Note that simplicity is omitted from \cite[Proposition 2.5(b)]{bridgequotient}.) By construction, we have that the diagram in the statement of the theorem commutes.

The argument given in, for instance, \cite[Proposition~7.18]{huybrechts}
shows that the restriction of $\Phi_{\widetilde{{P}}}$ to the residue field of $A$ is an equivalence.
To show that  $\Phi_{\widetilde{{P}}}$ is an equivalence, we appeal to a derived
form of the Nakayama
lemma. That is: $\Phi_{\widetilde{P}}$ has left and right adjoints,
with kernels $\widetilde{L}$ and $\widetilde{R}$.
The adjunctions induce maps on kernels $\widetilde{L}\circ
\widetilde{P}\to \Delta_*\oO_{\ms X}$ and $\Delta_*\oO_{\ms Y} \to
\widetilde{P}\circ \widetilde{R}$.
The restrictions of these morphisms to the residue field of $A$ are
isomorphisms by \cite[Lemma~4.3(a)]{bridgequotient}, whose proof works
over any separably closed field in which $n$ is invertible.
Hence, by Nakayama's Lemma, $\Phi_{\widetilde{P}}$ is an equivalence.
\end{proof}

\section{Deformations of twisted Fourier--Mukai kernels}
\label{sec.fi}

\subsection{Some results on the deformation theory of complexes}
\label{sec:two-lemm-deform}

Let $A\to A_0$ a square-zero extension of rings with kernel $I$.

\begin{lemma}\label{L:obstruction-pulls-back} Let $\gamma: (Z,\alpha)\to (X,\beta)$ be
	an untwisted finite \'etale morphism of flat separated twisted
        schemes over $A$. Let $(Z,\alpha)_{A_0}$ be the twisted scheme $(Z|_{A_0},\alpha|_{A_0})$.  Given a 
	perfect complex $P\in D((Z,\alpha)_{A_0})$, the natural map
        \begin{equation}\label{pullback}
          \Ext^2_{X_0}(P,P\otimes I)\to\Ext^2_{Z_0}(\LL\gamma^\ast P,
          \LL\gamma^\ast P\ltensor I)
\end{equation}
          sends the obstruction to deforming $P$
	over $A$ to the obstruction to deforming $\LL\gamma^\ast P$ over A.
\end{lemma}

\begin{proof} 
	The proof of this result we offer here is undoubtedly far from
	ideal. Unfortunately, there does not appear to be an argument in the
	literature that is general enough to work without resorting to the
	techniques used in \cite[Section 3]{deformingkernels}. The idea there
	is as follows: given the complex $P$, one replaces it by a ``good
	resolution'' $P'\to P$, which is a quasi-isomorphism in which $P'$ has
	terms of the form $\oplus j_!\oO_U$ for {\'e}tale morphisms $j: U\to
	X$ with $U$ affine. The complex $P'$ is itself $K$-flat (in the sense
	of \cite{MR932640}), so it can be used to compute derived tensor
	products. The obstruction class arises by computing good resolutions
	of $P$ over $A_0$ and over $A$ and then making an explicit map 
	$$Q\to P\ltensor_{A_0}I[1]$$ in $D^b(X_0)$, where $Q$ is the homotopy
	limit of the derived adjunction map $P\ltensor_A A_0\to P$. (Details may be
  found in \cite[Construction 3.2.8]{deformingkernels}.) To adapt this to the
  twisted setting, we replace $\ms O$ with an Azumaya algebra $\ms A$
  representing $\beta$. The rest of the arguments carry over verbatim.
	
	Consider
	the Cartesian diagram
	$$\xymatrix{
		U'\ar[r]^{j'}\ar[d]_{\gamma'} & Z\ar[d]^\gamma\\
		U\ar[r]_j & X.
	}$$
	To establish the appropriate functoriality of the obstruction class,
	it suffices to show that $\gamma^\ast j_!\ms A_U=j'_!\ms A_{U'}$.
	By flat base change we have a canonical isomorphism of functors 
	$j^\ast\gamma_\ast=\gamma'_\ast(j')^\ast$, giving rise to a canonical 
	isomorphism $\gamma^\ast 
	j_!=j'_!(\gamma')^\ast$. This
	gives the desired result.
\end{proof}

In what follows, we will use a result that is essentially a folk theorem. We state it here as a conjecture; a proof will appear in \cite{lieblicholsson}.
\begin{conj}\label{L:obstruction-traces-down}
  Let $X\to\Spec A$ be a flat Artin stack such that $\omega_{X/A}\cong \ms O_X$ 
and let $X_0=X\tensor_A A_0$.
  Suppose $P_0$ is a perfect complex on $X_0$ with
  determinant $L_0$. Let \[o(P_0)\in\Ext^2_{X_0}(P_0,P_0\ltensor I)\] and
  \[o(L_0)\in\Ext^2_{X_0}(\ms O, \ms O\ltensor I)\] be the
  obstruction classes to the deformation of $P_0$ and $L_0$ to $X$. Then the
  trace map
\begin{equation}\label{trace}
    \Ext^2_{X_0}(P_0, P_0\ltensor I)\to\Ext^2_{X_0}(\ms O,\ms O\ltensor I)
\end{equation}
  sends $o(P_0)$ to $o(L_0)$.
\end{conj}

\begin{remark}
  Note that Conjecture \ref{L:obstruction-traces-down} is phrased for a flat
  Artin stack, so it doesn't fit into the usual Illusie topos 
  framework. Nevertheless, the deformation-obstruction theory for perfect
  complexes has the same formal propertiers as in the classical topos-theoretic
  case by the alternative argument of Grothendieck \cite[IV.3.1.12]{IllusieLuc1971Cced}.
\end{remark}
\begin{proof}[Remarks on the state of Conjecture \ref{L:obstruction-traces-down}.]
	As far as we can tell, there is no proof anywhere in the literature even in
	the case of deformations of a coherent sheaf. The proof for locally free sheaves on projective varieties is proved in \cite[Theorem 3.23]{thomas2000}, but  
	assumptions are made about the base schemes (see the conditions immediately
	preceding 
	equation 3.8) that do not apply to general deformation
	problems, including lifting (for example, lifting from $\Z/p\Z$ to
	$\Z/p^2\Z$). There is a derived version of the statement 
	in \cite[Section 3]{MR3341464}, but [ibid.] works over $\C$ rather than a
	general base. There is another derived version at \cite{MR3701353} that also assumes the base has characteristic $0$. Finally, there is a similar statement at the end of
	\cite{MR3103888}, which works over general Noetherian bases, but there are
	flatness assumptions that do not hold in the present context (or, more
	generally, in the context of infinitesimal deformation problems where the base
	is not a field). We will not attempt to give a proof here, so we will refer to
	this statement as a conjecture rather than a lemma. A write-up that works in sufficient generality will appear in \cite{lieblicholsson}.
\end{proof}

\begin{lem}\label{L:pullback-thang}
	Suppose $\gamma: (Z,\alpha)\to (X,\beta)$ is an untwisted 
	finite \'etale 
morphism of smooth proper twisted schemes of relative
        dimension $2$ over $A$ such that 
	$\omega_{Z/A}\cong\ms O_Z$ and its degree is invertible in $A$. 
	Suppose $Q$ is a
	perfect complex on $(X_{A_0},\beta_0)$ such that
\renewcommand\theenumi{\arabic{enumi}}
	\begin{enumerate}
		\item the invertible sheaf $\det Q\in\Pic(Z,\alpha)$ is unobstructed with
      respect to the  
		extension $(Z_0,\alpha_0)\subset(Z,\alpha)$;
		\item we have $$\LL \gamma^\ast Q\cong\bigoplus_{i=1}^m Q_i$$ for some 
		$m$ invertible in
		$A_0$ with each $Q_i$ a simple perfect complex on $(Z_0,\alpha_0)$;
		\item for each $i=2,\ldots,m$ we have 
		$$\det Q_i\cong\det Q_1\tensor\Lambda_i$$ for some 
		$\Lambda_i\in\Pic(Z_0,\alpha_0)$ 
		that is
		unobstructed with respect to the extension $(Z_{0},\alpha_0)\subset(Z,\alpha)$.
	\end{enumerate}
	Then the complex $Q$ is unobstructed.
\end{lem}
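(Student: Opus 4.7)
The plan is to reduce vanishing of $o(Q) \in \Ext^2_{X_0}(Q, Q \ltensor I)$ to vanishing of the obstructions of the simple summands $Q_i$ of $\LL\gamma^*Q$, and then to control each $o(Q_i)$ via its determinant. Suppose $\gamma$ has degree $d$ invertible in $A_0$ (as is the case in the intended application). The trace splits the unit $\ms O_X \to \gamma_*\ms O_Z$, so by the projection formula $Q$ is a direct summand of $\gamma_*\gamma^*Q$; hence the pullback
\[
\LL\gamma^*\colon \Ext^2_{X_0}(Q, Q \ltensor I) \to \Ext^2_{Z_0}(\LL\gamma^* Q, \LL\gamma^* Q \ltensor I)
\]
is split injective. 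By Lemma~\ref{L:obstruction-pulls-back}, $\LL\gamma^* o(Q) = o(\LL\gamma^* Q)$, so it is enough to produce a lift of each $Q_i$ separately and assemble them into a lift of $\LL\gamma^* Q$; equivalently, to show $o(Q_i) = 0$ for every $i$.

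Next, I would show each $\det Q_i$ is unobstructed. Pulling back assumption~(1) gives that $\det \LL\gamma^* Q = (\det Q_1)^{\otimes m} \otimes \bigotimes_{i \geq 2} \Lambda_i$ is unobstructed on $(Z,\alpha)$. Combined with assumption~(3) (unobstructedness of each $\Lambda_i$), this yields unobstructedness of $(\det Q_1)^{\otimes m}$. The obstruction for line bundles lives in $\H^2(Z_0, \ms O_{Z_0}) \otimes I$ and is additive in the tensor power, so $m \cdot o(\det Q_1) = 0$; invertibility of $m$ in $A_0$ forces $o(\det Q_1) = 0$, and then assumption~(3) gives $o(\det Q_i) = 0$ for all $i$.

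Applying Conjecture~\ref{L:obstruction-traces-down} to each $Q_i$, the trace map $\Ext^2_{Z_0}(Q_i, Q_i \ltensor I) \to \Ext^2_{Z_0}(\ms O, \ms O \ltensor I)$ sends $o(Q_i)$ to $o(\det Q_i) = 0$. Because $Q_i$ is simple and $\omega_{Z/A} \cong \ms O_Z$, Serre duality on $Z_0/A_0$ identifies this trace on $\Ext^2$ with the $A_0$-dual of the unit map $\H^0(Z_0, \ms O_{Z_0}) \to \Hom(Q_i, Q_i)$ sending $1 \mapsto \id_{Q_i}$, which is an isomorphism of rank-one $A_0$-modules. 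Hence the trace is an isomorphism and $o(Q_i) = 0$; combined with the reduction of the first paragraph, $o(Q) = 0$ and $Q$ is unobstructed.

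The principal difficulty is the appeal to Conjecture~\ref{L:obstruction-traces-down}, whose full proof is already acknowledged as missing from the literature; everything else — the Serre-duality identification of the trace on $\Ext^2$, the line-bundle $m$-th root computation, and the splitting of $\LL\gamma^*$ via the trace on $\gamma_*\ms O_Z$ — is routine under the standing invertibility assumptions. A secondary issue is taking sufficient care when $A_0$ is Artinian rather than a field, so that Serre duality and the projection formula behave as expected; this is handled by the standing hypothesis that $Z$ is smooth proper over $A$ with trivial relative dualizing sheaf.
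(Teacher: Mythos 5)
Your proof is correct and follows essentially the same route as the paper's: pull back along $\gamma$ via Lemma~\ref{L:obstruction-pulls-back}, reduce to determinants via Conjecture~\ref{L:obstruction-traces-down}, and kill $m\ob(\det Q_1)$ using the invertibility of $m$. The only difference is that you spell out two steps the paper leaves implicit --- the split injectivity of $\LL\gamma^\ast$ on $\Ext^2$ via the trace (valid since the degree is invertible in the intended application), and the Serre-duality/simplicity argument showing the trace map on $\Ext^2_{Z_0}(Q_i,Q_i\ltensor I)$ is injective --- which is precisely why the hypotheses include simplicity of the $Q_i$ and $\omega_{Z/A}\cong\ms O_Z$.
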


\begin{proof}
  Since the degree $d$ of $\gamma$ is invertible in $A$, the map $\frac{1}{d}\tr_{\gamma}$ is a section of $O_X\to\gamma_*\oO_Z$. Hence, \eqref{pullback} is injective and by
Lemma~\ref{L:obstruction-pulls-back}, it suffices to show that $\LL 
	\gamma^\ast Q$ is unobstructed.
        Further by 
	Conjecture~\ref{L:obstruction-traces-down} it suffices to show that $\det(Q_1)$ 
	is unobstructed; note that
        the simplicity assumption is necessary for \eqref{trace} to be injective in the relevant case.
        
	By assumption,
	$$\LL \gamma^\ast Q\cong \det(Q_1)^{\otimes m}\otimes 
	\Lambda_2\otimes\cdots\otimes\Lambda_m .$$
	Since the $\Lambda_i$ and $\det(Q)$, and hence $\gamma^*\det(Q)$ are 
	unobstructed, we have $m\ob(\det Q_1)=0$.
	Since $m$ is invertible in $A_0$, we conclude that $\ob(\det Q_1)=0$.
\end{proof}

The following setting puts us in the situation of either Theorem~\ref{thm:bielliptic} or \ref{thm:enriques}.

\begin{setting}\label{setting}
Now suppose that $(X,\alpha)$ and $(Y,\beta)$ are twisted schemes that are
either
smooth proper Enriques or 
trivially twisted bielliptic surfaces over an Artinian ring $A$ with
algebraically closed residue field $k$.  
Let $n$ be the order of $\omega_X$ in $\Pic(X)$. 
Suppose $$\Phi_P: D(X,\alpha)\to D(Y,\beta)$$ is a Fourier--Mukai equivalence. 
We assume that the characteristic
of $k$ is at least 3 if $X$ is Enriques and at least $5$ if $X$ is bielliptic.

Note that in this setting, $\m_n$-gerbes representing $X$ and $Y$ may be chosen as the Brauer group of an Enriques surface has a cardinality of $2$.
\end{setting}

\begin{lem}\label{L:pullback-bi}
Suppose we are in the situation described in Setting~\ref{setting}.
Given a section $x\in X(A)$, let $\ms L_x$ denote an
  invertible $\alpha|_x$-twisted sheaf. There is a decomposition
	$$\LL\pi_Y^\ast\Phi_P(\ms L_x)\cong\bigoplus_{i=1}^n Q_i$$
	in $D(\widetilde Y,\beta_{\widetilde Y})$ that satisfies the 
	conditions 
	of Lemma~\ref{L:pullback-thang}.
\end{lem}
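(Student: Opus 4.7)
The plan is to leverage the lifted Fourier--Mukai equivalence from Proposition~\ref{BMRelative} to reduce the decomposition question to the computation of $\LL\pi_X^\ast\ms L_x$, and then to verify the three hypotheses of Lemma~\ref{L:pullback-thang} in sequence.

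First I would apply Proposition~\ref{BMRelative} to obtain a lift $\widetilde P\in D^{(-1,1)}(\widetilde{\ms X}\times_S\widetilde{\ms Y})$ and an equivalence $\Phi_{\widetilde P}$; the second commutative diagram in \eqref{diag} then supplies a natural isomorphism $\LL\pi_Y^\ast\Phi_P(\ms L_x)\cong\Phi_{\widetilde P}(\LL\pi_X^\ast\ms L_x)$. Because $A$ is Artinian local with algebraically closed residue field, the finite \'etale cover $\pi_X^{-1}(x)\to\Spec A$ splits into $n$ disjoint sections $\widetilde x_1,\ldots,\widetilde x_n$ of $\widetilde X/A$, so that $\LL\pi_X^\ast\ms L_x\cong\bigoplus_{i=1}^n\ms L_{\widetilde x_i}$. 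Setting $Q_i\coloneqq\Phi_{\widetilde P}(\ms L_{\widetilde x_i})$ then produces the required decomposition $\LL\pi_Y^\ast\Phi_P(\ms L_x)\cong\bigoplus_{i=1}^n Q_i$ in $D(\widetilde Y,\beta_{\widetilde Y})$.

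Next I would dispatch conditions (1) and (2) of Lemma~\ref{L:pullback-thang}, both of which should be essentially immediate. The integer $n$ (which is $2$ for an Enriques surface and lies in $\{2,3,4,6\}$ for a bielliptic surface) is invertible in $A_0$ by the characteristic hypotheses of Setting~\ref{setting}, and each $Q_i$ is simple because $\Phi_{\widetilde P}$ is an equivalence and an invertible twisted skyscraper is simple. For (1), both Enriques and bielliptic surfaces have $p_g=0$, so $\H^2(\ms O_{\ms Y})=\H^2(\ms O_Y)=0$ and the obstruction class of $\det Q$, which lives in $\H^2(\ms O_{\ms Y})\otimes I$, vanishes automatically.

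The hard part will be condition (3): that $\Lambda_i\coloneqq\det Q_i\otimes\det Q_1^{-1}$ is unobstructed. My plan is to use cohomological Mukai vectors, applying the cohomological analog of Lemma~\ref{lem:mukai-fn} (cf.~Lemma~\ref{lem:coho-iso}). Cohomologically, $v(\ms L_{\widetilde x_i})=(0,0,[\mathrm{pt}])\in\H^\ast(\widetilde X)_\Q$ is independent of $i$ on the connected variety $\widetilde X$, so after transport by $\widetilde P$ we obtain $v(Q_i)=v(Q_1)$ in $\H^\ast(\widetilde Y)_\Q$. Since $\omega_{\widetilde Y}$ is trivial, the degree-one component of this equation forces $c_1(\Lambda_i)=0$ in $\H^2(\widetilde Y)_\Q$; and since $\widetilde Y$ is either K3 or abelian, its second integral cohomology is torsion-free, so $c_1(\Lambda_i)$ vanishes integrally and $\Lambda_i\in\Pic^0(\widetilde Y)$ (noting that $\Lambda_i$ lies in the weight-zero piece of $\Pic(\widetilde{\ms Y})$ by Lemma~\ref{lem:det-tw}). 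In the Enriques case $\Pic^0(\widetilde Y)=0$, so $\Lambda_i\cong\ms O$; in the bielliptic case $\Pic^0(\widetilde Y)=\widehat{\widetilde Y}$, but the obstruction to lifting $\Lambda_i$ is computed by pairing $c_1(\Lambda_i)$ with the Kodaira--Spencer class, so it vanishes regardless. Combining the three verified conditions, Lemma~\ref{L:pullback-thang} applies.
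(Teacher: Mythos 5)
Your proof is correct and follows essentially the same route as the paper: lift $\Phi_P$ via Proposition~\ref{BMRelative}, use the diagrams \eqref{diag} to rewrite $\LL\pi_Y^\ast\Phi_P(\ms L_x)$ as $\Phi_{\widetilde P}$ of $\LL\pi_X^\ast\ms L_x$, decompose the latter into twisted skyscrapers along $\pi_X^{-1}(x)$, and use Mukai-vector functoriality to show each $\Lambda_i$ is numerically trivial, hence lies in $\Pic^0$ of the K3 or abelian cover. The only divergence is your last step in the bielliptic case: the paper concludes uniformly by observing that $\Pic^0_{\widetilde{\ms Y}/A}$ is smooth over $A$ (the trivial group scheme for a K3 cover, the dual abelian scheme for an abelian cover), which is cleaner than invoking a Kodaira--Spencer pairing, since over a possibly mixed-characteristic square-zero extension the obstruction is the deformation class paired with the Hodge-theoretic $c_1$ (Atiyah class), not the $\ell$-adic class you computed; your conclusion still stands because $\Lambda_i\in\Pic^0(\widetilde Y_k)$ is divisible, so its Hodge $c_1$ vanishes as well. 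Incidentally, your use of cohomological rather than Chow-theoretic Mukai vectors is harmless, and arguably safer, since distinct points in a fiber of $\widetilde X\to X$ need not be rationally equivalent on the K3 cover; only the numerical (homological) conclusion is needed to land in $\Pic^0$.
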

\begin{proof}
  Choose a finite closed subgroup $\G(=\m_n)\subset\G_m$ and $\G$-gerbes $\ms X\to X$
  and $\ms Y\to Y$ representing $\alpha$ and $\beta$. Choosing such structures
  allows us to use the theory of twisted Mukai vectors. 
	Let 
	$$\Phi_{\widetilde
    P}: D(\widetilde{\ms X})\to D(\widetilde{\ms Y})$$ be the canonical covering
  equivalence induced by $\Phi_P$ as described in Section
  \ref{sec:relat-bridg-maci}. By the commutativity of diagrams \eqref{diag},
  there is an isomorphism
	$$\LL\pi_Y^\ast\Phi_P(\ms L_x)\cong\Phi_{\widetilde P}(\LL\pi_X^\ast(\ms 
	L_x)).$$ On the other hand, $\LL\pi_X^\ast(\ms L_x)$ can be written as $\ms
  L_{x_1}\oplus\cdots\oplus \ms L_{x_n}$, where
  $\{x_1,\ldots,x_n\}=\pi_X^{-1}(x)$ and $\ms L_{x_i}$ is an invertible
  $\alpha_{x_i}$-twisted sheaf. Note that $\ms
  L_{x_1}|_{\ms X_k},\ldots,\ms L_{x_n}|_{\ms X_k}$ all have the same Mukai
  vector. It follows from Lemma \ref{lem:mukai-fn}
	$$\Phi_{\widetilde P_k}(\ms L_{x_1}|_{\widetilde{\ms X}_k}),\ldots,\Phi_{\widetilde P_k}(\ms 
	L_{x_n}|_{\widetilde{\ms X}_k})\in D(\widetilde{\ms Y}_k)$$ have the same Mukai
  vectors (where $\widetilde P_k$ denotes the derived restriction of $\widetilde
  P$ to $\widetilde{\ms X}_k\times\widetilde{\ms Y}_k$). In particular, 
  the determinants of the complexes $\Phi_{\widetilde P_k}(\ms
  L_{x_i}|_{\widetilde{\ms X}_k})$ are equal in $\Pic(\ms Y_k)\tensor\Q$. It
  follows that for every $i$ we have that the invertible sheaf
  \[
    \Lambda_i\coloneqq\det(\Phi_{\widetilde P}(\ms
  L_{x_i}))\tensor\det(\Phi_{\widetilde P}(\ms
  L_{x_1}))^\vee
  \]
 defines a section of $\Pic^0_{\widetilde{\ms Y}/A}$ over $A_0$.

  Since $Y$ is either Enriques or bielliptic (so that $\widetilde Y$ is either
  K3 or abelian), the scheme $\Pic^0_{\widetilde{\ms Y}/A}$ is smooth over $A$. The
  hypothesis of Lemma~\ref{L:pullback-thang} is thus satisfied.
\end{proof}

\subsection{Moduli of perfect complexes of twisted sheaves}
\label{sec:moduli-perf}

In this section we fix a smooth projective family $\pi: (Y,\alpha)\to S$ of
twisted schemes. We will write $\pPerf_{(Y,\alpha)/S}$ for the stack
of simple universally gluable relatively perfect complexes of
$\alpha$-twisted sheaves, generalizing the notions of \cite{deformingkernels};
see also \cite[\S 3]{1711.00846} for a discussion of this stack.
The
following proposition summarizes the main properties of $\pPerf_{(Y,\alpha)/S}$.

\begin{prop}
  Given $(Y,\alpha)\to S$, $\ms A$, and $\ms Y$ as above, the following hold.
  \begin{enumerate}
  \item The stack $\pPerf_{(Y,\alpha)/S}\to S$ is an Artin stack
    locally of finite presentation with inertia stack $\G_m$. (In particular, it
    is a $\G_m$-gerbe over an algebraic space locally of finite presentation.)
  \item Given a deformation situation $A'\to A\to A_0$ with kernel $I$ (in the
    notation of Artin \cite{artin}), 
    the natural obstruction theory for an object
    $P\in\pPerf_{(Y,\alpha)/S}(A)$ takes values in $\Ext^2_{\ms
      Y_A}(P,P\ltensor I)$. The tangent theory takes values in $\Ext^1_{\ms
      Y_A}(P,P\ltensor I)$.
  \item The determinant defines a morphism
    $$\delta: \pPerf_{(Y,\alpha)/S}\to\sPic_{(Y,\alpha)/S}$$
    of algebraic stacks.
  \end{enumerate}
\end{prop}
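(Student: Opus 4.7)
The plan is to reduce all three assertions to the corresponding untwisted statements on the gerbe $\pi\colon\ms Y\to Y$, where $\ms Y$ is a $\m_n$-gerbe representing $\alpha$ (with $n$ invertible on $S$, so that $\ms Y$ is a tame Deligne--Mumford stack smooth and proper over $S$). Once we are in this setting, the theory developed in \cite{deformingkernels} for perfect complexes on schemes extends essentially verbatim to tame proper DM stacks, and the twisted objects are carved out by the weight condition from the decomposition \eqref{decomp}.

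For part (1), I would first construct the stack $\pPerf_{\ms Y/S}$ of simple universally gluable relatively perfect complexes on $\ms Y$ by repeating the constructions of \cite[\S 4]{deformingkernels}: the existence of good \'etale-local resolutions, the openness of simplicity and universal gluability, and the verification of Artin's axioms all go through because $\ms Y$ is smooth, proper, and tame over $S$. The substack $\pPerf_{(Y,\alpha)/S}\subset\pPerf_{\ms Y/S}$ consisting of complexes of pure weight $1$ is open and closed, since the weight decomposition is functorial under base change and the vanishing of individual weight eigenspaces is a closed condition preserved by deformation. The $\G_m$-inertia arises from scalar automorphisms of simple complexes, exactly as in the untwisted case, with no further contributions because $\End(P)=k$ in fibers over geometric points.

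For part (2), I would appeal to the Atiyah-class-based obstruction theory of \cite[\S 3]{deformingkernels} (which follows Illusie's formalism, cf.\ \cite[IV.3.1.12]{IllusieLuc1971Cced} for the adaptation to more general topoi). Pairing the Atiyah class of $P$ on $\ms Y_A$ with the Kodaira--Spencer class of the extension $A'\to A$ produces an element of $\Ext^2_{\ms Y_A}(P,P\ltensor I)$ whose vanishing is necessary and sufficient to deform $P$, while tangent directions are identified with $\Ext^1_{\ms Y_A}(P,P\ltensor I)$ in the usual way. Since both $P$ and $P\ltensor I$ lie in weight $1$ (the latter because $I$ is untwisted), the deformation theory of the weight-$1$ subcategory is governed by the same $\Ext$ groups as the ambient deformation theory on $\ms Y_A$; no further modification is required.

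For part (3), the Knudsen--Mumford determinant construction is natural in base change and extends to perfect complexes on tame DM stacks. Applied to a weight-$1$ complex $P$ of rank $r$, it produces an invertible sheaf $\det P$ on $\ms Y$ which by Lemma~\ref{lem:det-tw} has pure weight $r$; this yields a functorial morphism $\delta\colon\pPerf_{(Y,\alpha)/S}\to\sPic_{(Y,\alpha)/S}$ of algebraic stacks over $S$. The main obstacle is not any individual step but the systematic verification that the framework of \cite{deformingkernels}---good resolutions, Atiyah classes, openness of universal gluability, and Artin's axioms---transfers to tame proper DM stacks over $S$. Tameness of $\ms Y$ is what makes $\pi_\ast$ exact on quasi-coherent sheaves and what allows every construction in \cite{deformingkernels} to go through without essential modification.
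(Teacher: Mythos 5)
Your proposal is correct in outline, but it takes a different route from the one the paper actually indicates. The paper's proof is itself only a sketch offering two options: a ``low-technology'' argument that realizes twisted complexes as complexes of modules over an Azumaya algebra $\ms A$ representing $\alpha$ and observes that the proofs of \cite{deformingkernels} carry over \emph{mutatis mutandis} for $\ms A$-modules, and a ``higher-tech'' argument importing the derived techniques of To\"en--Vaqui\'e \cite{MR2493386} as in \cite{1711.00846}. You instead work directly on a $\m_n$-gerbe $\ms Y\to Y$, extend the machinery of \cite{deformingkernels} to tame proper Deligne--Mumford stacks, and cut out the twisted objects by the weight-$1$ condition from the inertial decomposition. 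Each approach buys something: the Azumaya route needs no tameness hypothesis (it works for Brauer classes of any order, in any characteristic) and requires no re-verification of the resolution and gluing arguments on a stack, whereas your gerbe route requires the order of $\alpha$ to be invertible on $S$ (harmless in this paper, where that is assumed throughout) but handles part (3) more naturally, since---as the paper itself remarks---the Azumaya-module model does not preserve ranks and determinants, while on the gerbe the Knudsen--Mumford determinant combined with Lemma~\ref{lem:det-tw} gives the morphism $\delta$ directly. One could also run your argument on the $\G_m$-gerbe (an Artin stack), which is what the paper does elsewhere (cf.\ Proposition~\ref{prop:open-imm}), avoiding the tameness restriction.

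Two small corrections to your write-up. First, the obstruction theory of \cite{deformingkernels} is not Atiyah-class based: as recalled in the proof of Lemma~\ref{L:obstruction-pulls-back}, it is built from ``good resolutions'' and an explicit map to $P\ltensor_{A_0}I[1]$; your attribution is off, though the conclusion (obstructions in $\Ext^2$, tangents in $\Ext^1$, unchanged by the weight constraint since any infinitesimal deformation of a pure weight-$1$ complex stays pure of weight $1$) is correct. Second, for part (1) openness of the weight-$1$ locus is all you need and all you should claim; the locus where the other weight summands of a perfect complex vanish is open (complement of their supports) but need not be closed, and closedness plays no role in algebraicity. Finally, note that since $\det P$ has weight equal to the rank of $P$, the target of $\delta$ should be read as the Picard stack of the gerbe (graded by weight), consistent with Lemma~\ref{lem:det-tw}, rather than the stack of weight-$1$ invertible twisted sheaves.
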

\begin{proof}
  There are several proofs of these statements. A low-technology version is to
  realize perfect complexes of twisted sheaves as complexes of $\ms A$-modules
  for an Azumaya algebra $\ms A$ with Brauer class $\alpha$, and note that the
  proofs of \cite{deformingkernels} carry over \emph{mutatis mutandis\/} for
  $\ms A$-modules. A higher-tech version is to import the derived techniques of
  To\"en--Vaqui\'e \cite{MR2493386}, as described in \cite{1711.00846}. We will not
  discuss the details here.
\end{proof}

\begin{remark}
In the twisted case, we also have the following. Proofs are identical
to the proofs in Section 3.1 and Section 4 of \cite{LO}, \emph{mutatis
  mutandis\/}. Given a pair of $\G_m$-gerbes $\ms X\to X$ and $\ms
Y\to Y$ flat and of finite presentation over a base $B$, a
Fourier--Mukai kernel $P\in D^{(-1,1)}(\ms X\times \ms Y)$ 
induces a morphism
\[\kappa_P:\ms X\to\pPerf_{\ms Y/B}.\]
\end{remark}

\begin{prop}\label{prop:open-imm}
  Suppose $\ms X\to X$ and $\ms Y\to Y$ are $\G_m$-gerbes representing
  smooth twisted (relative) varieties over a base scheme $B$. Given a
  Fourier--Mukai equivalence 
  \[\Phi_P:D^{(1)}(\ms X)\to D^{(1)}(\ms Y),\]
  the induced morphism
  \[\kappa_P:\ms X\to\pPerf_{\ms Y/B}\]
  is an open immersion.
\end{prop}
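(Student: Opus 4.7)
The plan is to show that $\kappa_P$ is both a monomorphism and formally \'etale; since both source and target are locally of finite presentation over $B$, this forces $\kappa_P$ to be an open immersion by standard results. The argument adapts \cite[\S 4]{LO} to the twisted setting.

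For the monomorphism, both $\ms X$ and $\pPerf_{\ms Y/B}$ are $\G_m$-gerbes over algebraic spaces (the latter by the previous proposition), so it suffices to check that $\kappa_P$ induces the identity on inertia and is injective on geometric points. On inertia, the map $\G_m = \aut_{\ms X}(x) \to \aut_{\pPerf}(\kappa_P(x)) = \G_m$ sends a scalar $\lambda$ to its action on $\Phi_{P_{\bar s}}(\kappa_x)$, which is again $\lambda$ by $k$-linearity. For injectivity on geometric points $x \in \ms X_{\bar s}$, the image is $\Phi_{P_{\bar s}}(\kappa_x)$, where $\kappa_x$ is a $1$-twisted skyscraper at $x$; distinct points yield non-isomorphic simple skyscrapers, and since $\Phi_{P_{\bar s}}$ is an equivalence, their images remain non-isomorphic simple complexes.

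For formal \'etaleness, suppose $T_0 \hookrightarrow T$ is a square-zero thickening of affine $B$-schemes with ideal $I$, $x_0 : T_0 \to \ms X$ is a section, and $\psi : T \to \pPerf_{\ms Y/B}$ satisfies $\psi|_{T_0} = \kappa_P \circ x_0$. Uniqueness of a lift $\widetilde{x}$ follows from the monomorphism property just established. For existence, smoothness of $\ms X \to B$ yields some lift $\widetilde{x}: T \to \ms X$ of $x_0$; then $\kappa_P \circ \widetilde{x}$ and $\psi$ are two lifts of $\kappa_P \circ x_0$, so they differ by an element of $\Ext^1_{\ms Y_{T_0}}(P_{x_0}, P_{x_0} \tensor I)$ (the torsor group supplied by the previous proposition). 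Adjusting $\widetilde{x}$ by the unique element of $\Ext^1_{\ms X_{T_0}}(\kappa_{x_0}, \kappa_{x_0} \tensor I)$ mapping to this difference under the isomorphism induced by $\Phi_P$ produces the desired lift.

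The main obstacle is showing that the $\Ext^1$-isomorphism induced by $\Phi_P$ really matches the infinitesimal action of $\kappa_P$ on the two deformation torsors, rather than merely being an abstract identification of $\Ext$ groups. In the untwisted case this is the content of \cite[\S 4]{LO} via the functorial derived deformation theory of \cite{deformingkernels}; the twisted analog is formal once one combines the Azumaya algebra/gerbe framework with the obstruction theory recalled in the previous proposition, but the bookkeeping must be done with care.
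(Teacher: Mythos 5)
Your proposal is correct and follows essentially the same route as the paper, which simply cites \cite[Proposition~4.4]{LO} and notes the two required modifications (upgrading from the sheafification of $\pPerf$ to the stack itself via the inertia argument using linearity of $\Phi_P$, and passing from fibers over a field to a general base via Nakayama). You have unpacked the argument of \emph{loc.~cit.} (monomorphism plus formal \'etaleness, with the inertia map being the identity because $\Phi_P$ is linear), and you correctly identify the one real subtlety -- that the $\Ext^1$-isomorphism induced by $\Phi_P$ must be compatible with the infinitesimal torsor actions on both sides -- which is precisely where the cited machinery of \cite{deformingkernels} and \cite{LO} enters.
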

\begin{proof}
  See Proposition 4.4 of \cite{LO}. Note that in \emph{loc. cit.\/}
  the morphism considered is the map from $X$ to the sheafification of
  $\pPerf_{\ms Y}$, and the base is assumed to be a field. The
  proposition claimed here follows from that result (readily modified for
  twisted sheaves) by Nakayama's lemma combined with the fact that the
  induced morphism of inertia \[\G_{m,\ms
    X}\to\kappa^\ast\G_{m,\pPerf_{\ms Y/B}}\] is an isomorphism because
    the functor $\Phi_P$ is linear.
\end{proof}

\begin{cor}\label{cor:rigid}
  Suppose $A\to A_0$ is a square-zero extension of rings and $\ms X\to
  X$ and $\ms Y\to Y$ are $\G_m$-gerbes representing smooth twisted
  schemes over $A$. Write $\ms X_0\to X_0$ and $\ms Y_0\to Y_0$ for
  the restrictions to $A_0$. Given a relative Fourier--Mukai kernel \[P_0\in
    D^{(-1,1)}(\ms X_0\times_{\Spec A_0}\ms Y_0),\]
  there is at most one object \[P\in D^{(-1,1)}(\ms
    X\times_{\Spec A}\ms Y)\] up to quasi-isomorphism 
  such that $\LL i^\ast P\cong P_0$, where $i:\ms X_0\times\ms
  Y_0\to\ms X\times\ms Y$ is the canonical inclusion.
\end{cor}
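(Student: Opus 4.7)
The plan is to translate the uniqueness of the lift $P$ into the uniqueness of a lift of the associated classifying morphism to the moduli stack $\pPerf_{\ms Y/A}$, and then to apply Proposition~\ref{prop:open-imm} together with the rigidity of étale morphisms. (In the intended application $P_0$ will be the kernel of a Fourier--Mukai equivalence, which is what makes Proposition~\ref{prop:open-imm} available; I proceed under this assumption.)

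First, I would invoke the modular interpretation of $\pPerf_{\ms Y/A}$ recalled in the remark preceding Proposition~\ref{prop:open-imm}: a perfect complex $P \in D^{(-1,1)}(\ms X \times_A \ms Y)$, considered up to quasi-isomorphism, is the same datum as a $1$-morphism $\kappa_P \colon \ms X \to \pPerf_{\ms Y/A}$, considered up to $2$-isomorphism. Under this dictionary the condition $\LL i^{\ast} P \simeq P_0$ corresponds to $\kappa_P|_{\ms X_0} \simeq \kappa_{P_0}$. The corollary therefore reduces to the statement that, given the fixed lifts $\ms X_0 \hookrightarrow \ms X$ and $\pPerf_{\ms Y_0/A_0} \hookrightarrow \pPerf_{\ms Y/A}$, any two lifts of the $1$-morphism $\kappa_{P_0}$ to a $1$-morphism $\ms X \to \pPerf_{\ms Y/A}$ are $2$-isomorphic.

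By Proposition~\ref{prop:open-imm}, the morphism $\kappa_{P_0}$ is an open immersion, and in particular is representable and étale, so its relative cotangent complex $\LL_{\kappa_{P_0}}$ vanishes. The standard deformation-theoretic formalism for morphisms between algebraic stacks---Illusie's theory, extended to algebraic stacks as in work of Olsson---then tells us that for a square-zero thickening with fixed source and target deformations, the set of $2$-isomorphism classes of lifts of a morphism $f_0$ is a torsor under $\Ext^0(\LL_{f_0}, I \otimes_{A_0} \ms O_{\text{source}})$, with obstruction in $\Ext^1(\LL_{f_0}, I \otimes_{A_0} \ms O_{\text{source}})$. Since both groups vanish for $f_0 = \kappa_{P_0}$, the lift of $\kappa_{P_0}$ is unique up to $2$-isomorphism, and combining this with the previous paragraph yields the corollary.

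The hard part will be verifying that this deformation-theoretic apparatus applies cleanly to the target $\pPerf_{\ms Y/A}$, which is only a $\G_m$-gerbe over an algebraic space rather than a scheme. This is a well-trodden technical matter in the literature on derived moduli---see, e.g., the treatments in \cite{deformingkernels} or \cite{MR2493386} already invoked in the paper---and no essentially new input is required, but one does need to check carefully that the standard cotangent-complex formalism transports to the twisted, stacky setting, and in particular that the bijection between perfect complexes (up to quasi-isomorphism) and morphisms to $\pPerf_{\ms Y/A}$ (up to $2$-isomorphism) interacts correctly with the deformation obstruction theory.
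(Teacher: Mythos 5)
Your overall route is the same as the paper's: convert the uniqueness of $P$ into uniqueness of the lift of the classifying morphism $\kappa_{P_0}$ and use that $\kappa_{P_0}$ is an open immersion (the paper's proof is literally the one-line assertion that an open immersion extends in at most one way, up to $2$-isomorphism, across the square-zero thickening). The gap is in your deformation-theoretic justification of that rigidity. The vanishing of the \emph{relative} cotangent complex $\LL_{f_0}=\LL_{\ms X_0/\pPerf_{\ms Y_0/A_0}}$ controls the problem in which the target deformation is fixed and the source is allowed to deform \emph{together with} the map; it says the pair $(\ms X_0,\kappa_{P_0})$ deforms uniquely. For the problem at hand, where \emph{both} the source deformation $\ms X$ and the target deformation $\pPerf_{\ms Y/A}$ are fixed, lifts of $f_0$ are governed by $\LL f_0^\ast\LL_{\pPerf_{\ms Y_0/A_0}/A_0}$: the obstruction lies in $\Ext^1_{\ms X_0}(f_0^\ast\LL_{\pPerf_{\ms Y_0/A_0}/A_0}, I\tensor\ms O_{\ms X_0})$ and the lifts, when they exist, are moved around simply transitively by $\Hom_{\ms X_0}(f_0^\ast\Omega_{\pPerf_{\ms Y_0/A_0}/A_0}, I\tensor\ms O_{\ms X_0})$, which for an \'etale $f_0$ is essentially $\H^0(X_0,T_{X_0/A_0}\tensor I)$ and does not vanish for formal reasons. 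Equivalently, on the kernel side, the paper's own proposition on $\pPerf$ says the tangent theory takes values in $\Ext^1(P_0,P_0\ltensor I)$: lifts of $P_0$ to the fixed thickening $\ms X\times_{\Spec A}\ms Y$, when nonempty, form a torsor under this group, so no argument using only \'etaleness/openness of $\kappa_{P_0}$ can give uniqueness.

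To see that this is a real failure and not a bookkeeping issue, take $X_0=Y_0$ a bielliptic (or abelian) surface, trivial gerbes, $A=k[\epsilon]$, the trivial deformations, and $P_0=\ms O_{\Delta}$. If $M$ is a nontrivial line bundle on $X_A$ restricting to $\ms O_{X_0}$ (these exist since $\H^1(X_0,\ms O_{X_0})\neq 0$), then both $\ms O_{\Delta_A}$ and $\Delta_{A\ast}M$ are kernels of equivalences restricting to $P_0$, and they are not quasi-isomorphic; here $\Ext^1(P_0,P_0\tensor I)\cong(\H^0(X_0,T_{X_0})\oplus\H^1(X_0,\ms O_{X_0}))\tensor I\neq 0$. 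So in the stated generality the uniqueness genuinely needs the vanishing of $\Ext^1(P_0,P_0\tensor I)$ (which holds in the Enriques case, where $\H^0(T)=\H^1(\ms O)=0$, but not in the bielliptic case). Your instinct to supply a cotangent-complex argument for the rigidity the paper merely asserts is a good one, but as written it invokes the wrong deformation problem, and the conclusion it aims at cannot follow from \'etaleness alone.
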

\begin{proof}
  Since $\kappa_{P_0}$ is an open immersion, it has at most one
  extension to an open immersion $\ms X\to\pPerf_{\ms Y/A}$, up to
  $2$-isomorphism.
\end{proof}

\subsection{An isomorphism of deformation functors}
\label{sec:statements-theorems}

Suppose $k$ is algebraically closed of characteristic at least $3$
(resp.\ $5$) and \[
  \Phi_P:D^{(1)}(X,\alpha)\to D^{(1)}(Y,\beta)
\] is a Fourier--Mukai equivalence. 

\begin{lem}\label{lem:it-is-what-it-is}
  If $X$ is Enriques (resp.\ bielliptic) then $Y$ is Enriques (resp.\
  bielliptic).
\end{lem}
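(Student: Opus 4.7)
The plan is to read off the type of $Y$ from numerical invariants transported across the Fourier--Mukai equivalence, then apply the Bombieri--Mumford classification of minimal surfaces of Kodaira dimension zero in positive characteristic.

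First I would invoke Lemma~\ref{lem:canonical}: since $\Phi_P$ is an equivalence, $\dim Y = \dim X = 2$ and $[\omega_Y]\in\Pic(Y)$ has the same finite order $n$ as $[\omega_X]\in\Pic(X)$. In particular $\omega_Y$ is numerically trivial, so $Y$ contains no $(-1)$-curves and has Kodaira dimension $0$; hence $Y$ is a minimal smooth projective surface of Kodaira dimension $0$.

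Next I would apply Bombieri--Mumford: such a $Y$ is a K3 surface, an abelian surface, an Enriques surface, a bielliptic surface, or (only if $\charr k\in\{2,3\}$) a quasi-bielliptic surface. Since $n\geq 2$, neither K3 nor abelian is possible. If $X$ is bielliptic and $n>2$, the only remaining possibility is ``bielliptic,'' and the standing hypothesis $\charr k\geq 5$ rules out the quasi-bielliptic option. In the remaining subcases $n=2$ (namely $X$ Enriques, or $X$ bielliptic with $\omega_X$ of order $2$), I would distinguish Enriques from (quasi-)bielliptic by applying Lemma~\ref{lem:coho-iso}: odd $\ell$-adic cohomology is preserved under $\Phi_P$. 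An Enriques surface in characteristic $\geq 3$ is classical and has $b_1=b_3=0$, while a (quasi-)bielliptic surface has $b_1=b_3=2$. Thus $H^{\text{odd}}(Y)\cong H^{\text{odd}}(X)$ forces $Y$ to be Enriques when $X$ is Enriques, and forces $Y$ to be (quasi-)bielliptic when $X$ is bielliptic of order $2$; in the latter case $\charr k\geq 5$ once more excludes the quasi-bielliptic possibility.

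I do not anticipate a genuine obstacle: every step quotes either Lemma~\ref{lem:canonical}, Lemma~\ref{lem:coho-iso}, or the Bombieri--Mumford classification. The only care needed is with small-characteristic phenomena -- confirming that Enriques surfaces in characteristic $\geq 3$ have vanishing $b_1$, and that the quasi-bielliptic surfaces which can appear in characteristic $3$ (only in the Enriques case) still carry $b_1=2$, so that the parity argument continues to separate them from Enriques surfaces.
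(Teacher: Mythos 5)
Your proposal is correct and follows essentially the same route as the paper: Lemma~\ref{lem:canonical} to match dimension and the order of the canonical bundle, Lemma~\ref{lem:coho-iso} (with Poincar\'e duality) to match $\ell$-adic Betti numbers, and the Bombieri--Mumford classification to pin down the surface type. The paper states this more tersely, but your added case analysis (ruling out K3/abelian via torsion $\omega_Y$, and excluding quasi-bielliptic surfaces via the characteristic hypotheses and $b_1$) is just an explicit unpacking of the same argument.
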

\begin{proof}
  By Lemma \ref{lem:canonical}, $X$ and $Y$ have the same dimension and
  $\omega_X$ and $\omega_Y$ have the same order. Proposition \ref{lem:coho-iso},
  together with Poincar\'e duality, implies that derived equivalent surfaces
  have the same $\ell$-adic Betti numbers. We conclude that $Y$ is Enriques
  (resp.\ bielliptic) using Bombieri-Mumford classification of surfaces in
  positive characteristic \cite{BMII}.
\end{proof}

\begin{assumption}\label{ass:ass}
We assume in the rest of this section that $(X,\alpha)$ and
  $(Y,\beta)$ are twisted Enriques
  (resp. bielliptic) surfaces over an algebraically closed field $k$
  of characteristic at least $3$ (resp. at least $5$). We also fix
  $\G_m$-gerbes $\ms X\to X$ and $\ms Y\to Y$ representing $\alpha$
  and $\beta$. We fix a Fourier--Mukai equivalence
  \[\Phi_P:D^{(1)}(\ms X)\to D^{(1)}(\ms Y).\]
\end{assumption}

Let $A\to A_0$ be a map of Artinian augmented $W(k)$-algebras.
Suppose $\ms X_0\to
X_0$ is a $\G_m$-gerbe representing a point $[\ms
X_0]\in\Def_{(X,\alpha)}(A_0)$ and $\ms Y_A\to Y_A$ is a $\G_m$-gerbe
representing a point $[\ms Y_A]\in\Def_{(Y,\beta)}(A)$. Write $\ms
Y_0\to Y_0$ for the restriction of $\ms Y_A\to Y_A$ to $A_0$.

Suppose
\[\Phi_{P_0}: D^{(1)}(\ms X_0)\to D^{(1)}(\ms Y_0)\] is a relative
Fourier--Mukai equivalence. There is an 
associated open immersion
\[\kappa:\ms X_{A_0}\hookrightarrow\pPerf_{\ms Y_0/A_0}\]
of Artin stacks.
\begin{prop}\label{P:imsosmooth}
  The morphism $\kappa$ has image contained in the smooth locus
  of the structure morphism \[\pPerf_{\ms Y_A/A}\to\Spec A.\]
\end{prop}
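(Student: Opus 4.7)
The plan is to verify formal smoothness of $\pPerf_{\ms Y_A/A}\to\Spec A$ at each point in the image of $\kappa$; since $\pPerf$ is locally of finite presentation, pointwise formal smoothness propagates to smoothness in an open neighborhood. Concretely, given a square-zero extension $R'\to R$ of $A$-algebras with kernel $I$ and an $R$-valued point of $\pPerf_{\ms Y_A/A}$ in the image of $\kappa$, I must show that the obstruction in $\Ext^2_{\ms Y_R}(Q,Q\ltensor I)$ vanishes, where $Q$ is the corresponding perfect complex. Any such $Q$ factors through $\pPerf_{\ms Y_0/A_0}$, so $R$ is naturally an $A_0$-algebra and $Q\simeq\Phi_{P_R}(\ms L_x)$ for some $R$-valued point $x$ of $\ms X_{A_0}$, with $P_R$ the pullback of $P_0$.

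The strategy is to apply Lemma~\ref{L:pullback-thang} to the canonical \'etale cover $\pi_Y:\widetilde{\ms Y}_{R'}\to\ms Y_{R'}$ and the complex $Q$, with the square-zero extension $R'\to R$ playing the role of $A\to A_0$ in that lemma. Condition~(1) on $\det Q$ holds because $H^2(\ms Y_k,\oO_{\ms Y_k})=0$ for both Enriques and bielliptic surfaces, so $\Pic_{\ms Y_A/A}$ is smooth and every invertible sheaf on $\ms Y_R$ extends to $\ms Y_{R'}$. Conditions~(2) and~(3) follow from Lemma~\ref{L:pullback-bi} applied within Setting~\ref{setting} with $R$ in place of $A$: the pullback $\LL\pi_X^\ast\ms L_x$ splits as $\bigoplus_{i=1}^n\ms L_{x_i}$ because $\pi_X$ is finite \'etale, the complexes $Q_i\coloneqq\Phi_{\widetilde P_R}(\ms L_{x_i})$ are simple, and the ratios $\Lambda_i=\det(Q_i)\tensor\det(Q_1)^\vee$ define sections of $\Pic^0_{\widetilde{\ms Y}/A}$ by Corollary~\ref{cor:fm-muk}, which is smooth over $A$ since $\widetilde Y$ is K3 or abelian.

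Granting Conjecture~\ref{L:obstruction-traces-down}, Lemma~\ref{L:pullback-thang} then yields the desired vanishing of the obstruction. The principal technical point is that Lemma~\ref{L:pullback-bi} is phrased for sections over the fixed Artinian base $A$ of Setting~\ref{setting}, but its Mukai vector computation is essentially fiberwise and so transfers without modification when $A$ is replaced by $R$ and $x$ is taken to be an $R$-valued point of $\ms X_{A_0}$. The other load-bearing ingredient is the unresolved Conjecture~\ref{L:obstruction-traces-down}, which is the chief gap on which the argument rests.
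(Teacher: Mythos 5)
Your toolbox is the right one---Lemma~\ref{L:pullback-thang}, the decomposition of Lemma~\ref{L:pullback-bi}, smoothness of $\Pic^0_{\widetilde{\ms Y}/A}$ and of $\sPic$, and Conjecture~\ref{L:obstruction-traces-down}---but the way you set up the smoothness criterion has a genuine gap. You verify the lifting property only for those $R$-valued points of $\pPerf_{\ms Y_A/A}$ that lie in the image of $\kappa$; such points exist only over $A_0$-algebras and are, as you note, of the geometric form $\Phi_{P_R}(\ms L_x)$. But smoothness of $\pPerf_{\ms Y_A/A}\to\Spec A$ at a point $[Q_k]$ of the image is a statement about \emph{all} infinitesimal deformations of the complex $Q_k$ over Artinian local $A$-algebras $B$, including those $B$ that do not factor through $A_0$ (for instance $B=A$ itself when $A\neq A_0$). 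Over such $B$ neither $\kappa$, nor the kernel, nor any deformation of $X$ is yet available, so there is no identification $Q_B\simeq\Phi(\ms L_{x_B})$, and your argument says nothing about the obstruction class of such a $Q_B$. What your computation actually shows is that the morphism $\ms X_{A_0}\to\pPerf_{\ms Y_A/A}$ is unobstructed over $A$; that is strictly weaker than the assertion that its image lies in the smooth locus, and it is the stronger assertion that Theorem~\ref{T:defo-kernels} needs (it is what makes the open substack supported on the image flat over $A$).

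The paper's proof is organized differently at exactly this point: since $A_0$ is Artinian local, $|\ms X_{A_0}|=|\ms X_k|$, so by openness of the smooth locus one reduces to showing that each $k$-point $\Phi(\ms L_x)$, $x\in X(k)$, is a smooth point of $\pPerf_{\ms Y_A/A}\to\Spec A$, and then Lemma~\ref{L:pullback-bi} supplies the vanishing of obstructions. Even there, one must pass from an arbitrary deformation $Q_B$ of $\Phi_k(\ms L_x)$ back to a geometric one before the Mukai-vector/covering argument applies; the standard way is to induct on the length of $B$, lifting the open substack $\kappa(\ms X)$ together with its universal (kernel) complex one small extension at a time---the step your obstruction computation does handle---and then observing that any deformation over a local Artinian ring whose closed point is $[\Phi_k(\ms L_x)]$ automatically factors through the lifted open substack, hence is again of the form $\Phi(\ms L_{x'})$. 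This bootstrapping (or some substitute controlling non-geometric deformations) is the missing step in your write-up. The remaining caveats you flag---transporting Lemma~\ref{L:pullback-bi} to other Artinian bases and the reliance on Conjecture~\ref{L:obstruction-traces-down}---are shared with the paper and are not the issue.
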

\begin{proof}
  Since the smooth locus is open, it suffices to prove the result
  under the assumption that $A_0=k$, and then it is enough to prove
  that for any $x\in X(k)$ and any invertible twisted sheaf $\ms L_x$
  supported on $x$, the image complex $\Phi(\ms L_x)\in D(Y,\beta)$
  lies in the smooth locus of $\pPerf_{\ms Y_A/A}$. This follows
  immediately from Lemma \ref{L:pullback-bi}.
\end{proof}

\begin{thm}\label{T:defo-kernels}
Under Assumption \ref{ass:ass}, there is
an isomorphism of formal deformation functors
$$\rho: \Def_{(Y,\beta)}\to\Def_{(X,\alpha)}$$
with the following property.
Given a point \[[\ms Y_A\to Y_A]\in\Def_{\ms Y}(A)\]
with image \[[\ms X_A\to X_A]=\rho([\ms Y_A\to Y_A])\in\Def_{\ms X}(A),\]
there is a complex \[P\in D^{(-1,1)}(\ms X\times_{\Spec A}\ms Y),\]
unique up to quasi-isomorphism, 
such that  
$\LL i^\ast P_A\cong
P$, where
\[i:\ms X\times_{\Spec k}\ms Y\to\ms X_A\times_{\Spec A}\ms Y_A\]
is the natural closed immersion.
\end{thm}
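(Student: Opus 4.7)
The plan is to transport deformations through the moduli stack $\pPerf$: given a deformation of $\ms Y$, form the relative moduli stack $\pPerf_{\ms Y_A/A}$, take the open substack extending $\kappa_P(\ms X)$, and read off a deformation of $\ms X$ together with a lifted kernel coming from the universal complex on $\pPerf_{\ms Y_A/A} \times_A \ms Y_A$. Proposition~\ref{P:imsosmooth} guarantees that $\kappa_P(\ms X)$ lies in the smooth locus of $\pPerf_{\ms Y_A/A}\to\Spec A$, so the open extension exists automatically, and Corollary~\ref{cor:rigid} provides uniqueness of the lifted kernel.

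\textbf{Construction of $\rho$ and of $P_A$.} Given $[\ms Y_A]\in\Def_{(Y,\beta)}(A)$, let $\ms X_A\subset\pPerf_{\ms Y_A/A}$ denote the open substack whose closed fiber equals $\kappa_P(\ms X)$; this exists because the smooth locus is open and already contains $\kappa_P(\ms X)$ by Proposition~\ref{P:imsosmooth}. Then $\ms X_A\to\Spec A$ is smooth, it inherits a $\G_m$-gerbe structure from $\pPerf_{\ms Y_A/A}$, and its coarse moduli space $X_A$ is a smooth proper deformation of $X$ whose gerbe class restricts to $\alpha$ on the closed fiber. Set $\rho([\ms Y_A\to Y_A])\coloneqq[\ms X_A\to X_A]$. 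The inclusion $\ms X_A\hookrightarrow\pPerf_{\ms Y_A/A}$ is classified by a perfect complex on $\ms X_A\times_A\ms Y_A$ whose biweight, tracked through the compatibility of the two $\G_m$-inertias, is $(-1,1)$; this is the desired $P_A\in D^{(-1,1)}(\ms X_A\times_A\ms Y_A)$. By construction, its derived restriction to the closed fiber is $P$, and uniqueness up to quasi-isomorphism is exactly Corollary~\ref{cor:rigid}.

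\textbf{Invertibility.} To see that $\rho$ is an isomorphism of formal deformation functors, apply the same construction to the quasi-inverse Fourier--Mukai equivalence $\Phi_P^{-1}:D^{(1)}(\ms Y)\to D^{(1)}(\ms X)$, whose kernel $Q\in D^{(-1,1)}(\ms Y\times\ms X)$ is obtained from $P$ by Serre duality. This yields a morphism $\rho':\Def_{(X,\alpha)}\to\Def_{(Y,\beta)}$. Given $[\ms Y_A]$, the composition $\rho'(\rho([\ms Y_A]))$ is pinned down by an open substack of $\pPerf_{\ms X_A/A}$ through a lifted kernel; by Corollary~\ref{cor:rigid}, this lifted kernel must be $Q_A$, the Serre dual of $P_A$ constructed above, which forces the resulting deformation to be canonically isomorphic to the original $\ms Y_A$. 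The other composition is symmetric.

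\textbf{Main obstacle.} The delicate points are (i) checking that the $\G_m$-gerbe structure $\ms X_A$ inherits as an open substack of $\pPerf_{\ms Y_A/A}$ has Brauer class specializing to $\alpha$, and (ii) verifying that the pulled-back universal object actually lies in biweight $(-1,1)$. Both reduce to matching the gerbe-theoretic $\G_m$-inertia of $\ms X$ with the scaling inertia of perfect complexes along $\kappa_P$, which is the same $k$-linearity input that underlies Proposition~\ref{prop:open-imm}. Once this bookkeeping is in place, Proposition~\ref{P:imsosmooth} produces the lifted deformations of $\ms X$ for free, and Corollary~\ref{cor:rigid} closes the uniqueness part of the argument.
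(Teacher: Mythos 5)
Your proposal is correct and follows essentially the same route as the paper: identify $\ms X$ via $\kappa_P$ with an open substack lying in the smooth locus of $\pPerf_{\ms Y_A/A}$ (Propositions~\ref{prop:open-imm} and~\ref{P:imsosmooth}), lift it uniquely as an open substack of the smooth locus, restrict the universal complex to get $P_A$, and use Corollary~\ref{cor:rigid} for uniqueness. The only difference is cosmetic: where you make invertibility explicit via the inverse kernel $Q$ and a symmetric construction, the paper leaves this implicit, remarking only that $P_A$ is again an equivalence by Nakayama's lemma as in \cite[Theorem 6.1]{LO}.
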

\begin{proof}
  Given $\ms Y_A\to Y_A$, Proposition \ref{prop:open-imm} and
  Proposition~\ref{P:imsosmooth} imply that $\kappa_P$ identifies $\ms X$ with
  an open substack of $\pPerf_{Y/k}$ that lies in the smooth locus of
  $\pPerf_{\ms Y_A/A}$. Since open substacks of the smooth locus lift uniquely,
  we get an induced deformation $\ms X_A\to X_A$ that is a $\G_m$-gerbe, 
  giving a point of $\Def_{\ms X}(A)$. Restricting the universal complex gives
  the desired kernel $P_A$. (That $P_A$ also gives an equivalence follows from
  Nakayama's Lemma as in \cite[Theorem 6.1]{LO}.)
\end{proof}

By Proposition \ref{prop:brauer-var}, for any strictly Henselian local ring $R$
with residue field $k$ and any lift $Y_R/R$ (resp. $X_R/R$), restriction defines
an isomorphism $\Br(Y_R)\to\Br(Y)$ (resp. $\Br(X_R)\to\Br(X)$). Thus, for any
such lift, there is a canonical way to propogate any twisting class. Given
$\alpha\in\Br(Y)$ and a lift $Y_R/R$, we will write $\alpha_R$ for the canonical
lifting of $\alpha$.  
\begin{thm}\label{T:it-moves} Let $X$, $Y$, and $P$ be as in  Assumption
  \ref{ass:ass}. 
  For any complete local Noetherian ring
$R$ with residue field $k$ and any lift $Y_R\to\Spec R$ of $Y/k$, there is a
lift 
$X_R\to \Spec R$ such that $\Phi_P$ lifts to a relative Fourier--Mukai
equivalence \[\Phi_{P_R}:  
D^{(1)}(X_R,\alpha_R)\to D^{(1)}(Y_R,\beta_R).\]
\end{thm}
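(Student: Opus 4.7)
The plan is to reduce to the Artinian setting of Theorem~\ref{T:defo-kernels} and then algebraize. Write $R_n\coloneqq R/\mathfrak{m}_R^{n+1}$, so that $R=\varprojlim R_n$ and the given lift $Y_R$ restricts to a compatible system $\{Y_{R_n}\}$ of Artinian deformations. By Proposition~\ref{prop:brauer-var} the class $\beta$ propagates canonically to twists $\beta_{R_n}$ on each $Y_{R_n}$, yielding a compatible sequence of points $[\ms Y_{R_n}]\in\Def_{(Y,\beta)}(R_n)$.

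Applying the isomorphism $\rho:\Def_{(Y,\beta)}\to\Def_{(X,\alpha)}$ of Theorem~\ref{T:defo-kernels} level by level produces a compatible sequence of deformations $[\ms X_{R_n}]$ of $(X,\alpha)$ together with kernels
\[
P_{R_n}\in D^{(-1,1)}(\ms X_{R_n}\times_{\Spec R_n}\ms Y_{R_n})
\]
lifting $P$. Naturality of $\rho$ guarantees that the $\ms X_{R_n}$ fit into an inverse system, and the uniqueness clause of Theorem~\ref{T:defo-kernels} (equivalently, Corollary~\ref{cor:rigid}) ensures that the $P_{R_n}$ do so as well: $\LL i_n^\ast P_{R_{n+1}}\simeq P_{R_n}$ for each $n$.

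To algebraize, observe that $\H^2(X,\ms O_X)=0$ for both Enriques and bielliptic surfaces, so any ample class on $X$ lifts uniquely through the formal family and Grothendieck's existence theorem produces a projective algebraization $X_R\to\Spec R$. The twisted class $\alpha_R$ is then the canonical lift from Proposition~\ref{prop:brauer-var}. For the kernel, the version of Grothendieck existence for perfect complexes on a proper family—realized via the algebraization of open substacks of the locally finitely presented stack $\pPerf_{\ms X_R\times_{\Spec R}\ms Y_R/R}$ using the techniques of \cite{deformingkernels}—algebraizes the system $\{P_{R_n}\}$ to an object $P_R\in D^{(-1,1)}(\ms X_R\times_{\Spec R}\ms Y_R)$ restricting to each $P_{R_n}$.

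Finally, that $\Phi_{P_R}$ is an equivalence follows by the derived Nakayama argument of \cite[Theorem~6.1]{LO}: the left and right adjoint kernels of $\Phi_{P_R}$ admit unit and counit maps whose restrictions to the closed fiber are the corresponding adjunction maps for $\Phi_P$, hence are isomorphisms there; by Nakayama's lemma they are isomorphisms over $R$. The main obstacle in this plan is the algebraization of the kernel: while algebraization of $X_R$ itself is routine once polarizations lift, algebraizing a perfect complex on a twisted proper family over a complete Noetherian (rather than Artinian) base requires the non-trivial extension of Grothendieck existence to this derived setting, and is where the bulk of the work lies.
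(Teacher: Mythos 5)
Your proposal is correct and follows essentially the same route as the paper: Theorem~\ref{T:defo-kernels} applied over the Artinian quotients $R/\mathfrak m^{n+1}$ gives exactly the formal deformation and formal kernel the paper uses, the lift of an ample class (using $\H^2(X,\ms O_X)=0$) algebraizes $X_R$, Proposition~\ref{prop:brauer-var} propagates the Brauer classes, and Nakayama's lemma as in \cite[Theorem~6.1]{LO} shows $\Phi_{P_R}$ is an equivalence. The only remark is that the step you flag as the main obstacle, algebraizing the kernel, is not new work but a direct citation: the paper invokes the Grothendieck Existence Theorem for perfect complexes from \cite[Proposition~3.6.1]{deformingkernels}, which applies in this twisted setting (e.g.\ by working with modules over an Azumaya algebra).
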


\begin{proof}
Given a deformation $Y_R$ of $Y$, from Theorem~\ref{T:defo-kernels} we
get an induced \emph{formal\/} deformation $\mathfrak X_R\in\Def_X(\Spf R)$ and \[\mathfrak P_R\in D^{(1)}(\mathfrak X_R\times_{\Spf
R}\widehat{Y}_R, \alpha_R^{-1}\boxtimes\beta_R).\] Since $\H^2(X,\oO)=0$, any
ample invertible 
sheaf on $X$ lifts to $\mathfrak X_R$, so we can algebraize $\mathfrak
X_R$ to the completion of a relative Enriques surface 
$X_R$ that carries the canonical Brauer class $\alpha_R$ algebraizing the formal
lift. The Grothendieck Existence Theorem for perfect complexes
\cite[Proposition~3.6.1]{deformingkernels} algebraizes $\mathfrak P_R$
to a complex \[P_R\in D(X_R\times_{\Spec R} Y_R,
  \alpha_R^{-1}\boxtimes\beta_R).\] As in \cite[Proof
of Theorem~6.1]{LO}, Nakayama's Lemma then implies that $P_R$ is a
relative twisted Fourier--Mukai equivalence, as desired.
\end{proof}

\begin{remark}
In using the Grothendieck Existence Theorem for perfect complexes in the above proof, we are observing that the methods of \cite{deformingkernels} may be extended to the twisted case. The reader may find it interesting to note that, following the appearance of this paper, Ben Lim has proved a significant strenghtening of this result to perfect complexes on algebraic stacks \cite{benlim}, using methods distinct from \cite{deformingkernels}.
\end{remark}

\section{Proof of Theorem \ref{thm:enriques} and Theorem
  \ref{thm:bielliptic}}
\label{sec:proofs-theorems}
\label{sec.fo} 

Let $F: D^{(1)}(X,\alpha)\to D^{(1)}(Y,\beta)$ be an equivalence 
as in the statement of Theorem \ref{thm:enriques} or
  \ref{thm:bielliptic}.
By
\cite[Theorem 1.1]{twisted_kernel}, there is a kernel $P\in D(X\times Y,
\alpha^{-1}\boxtimes\beta)$ so that $F$ is naturally isomorphic to the
Fourier--Mukai equivalence $\Phi_P$. Enriques (resp.~bielliptic) surfaces of characteristic at least $3$ (resp.~$5$) can be lifted to
a finite extension of the Witt vectors; see \cite[Proposition~6.1.1]{IllusieFGA}
and Partsch \cite{Partsch}. By Theorem~\ref{T:it-moves}, given any lift $Y_R$ of
$Y$ over a finite flat $W(k)$-algebra $R$ there are induced deformations $X_R$
and $P_R\in D(X_R\times_{\Spec R}Y_R, \alpha^{-1}_R\boxtimes\beta_R)$ giving a
relative Fourier--Mukai equivalence. 

By \cite[Proposition~3.6.1]{deformingkernels}, for any ring homomorphism $R\to
S$, the base change $P_S$ induces an equivalence
\[D^{(1)}(X_S,\alpha_S)\to D^{(1)}(Y_S,\beta_S).\] In particular we may choose
an embedding $\kappa(R)\to\C$, yielding a Fourier--Mukai equivalence $$\Phi_{P_{\C}}:
D(X_{\C},\alpha_{\C})\simeq D(Y_{\C},\beta_{\C}).$$ If $\alpha$ and $\beta$ are $0$ then
\cite[Proposition 6.1, 6.2]{bridgesurfaces} implies that $X_{\C}$ and $Y_{\C}$
are isomorphic. If $X$ and $Y$ are Enriques and $\alpha$ and $\beta$ are
arbitrary, then 
\cite{addingtonwray} ensures that $(X_{\C},\alpha_{\C})$ and $(Y_{\C},\beta_{\C})$ are
isomorphic.

Spreading out, we find a finite extension $K'$ of $\kappa(R)$ such that there is
an isomorphism
\begin{equation}\label{eq:iso}
f: (X_{K'},\alpha_{K'})\to(Y_{K'},\beta_{K'})
\end{equation} over $K'$. The
normalization $R'$ of $R$ in $K'$ is a complete DVR with residue field $k$ and
fraction field $K'$. Since invertible sheaves are unobstructed, this isomorphism
preserves relative polarizations over $R'$. Hence, we may use \cite[Theorem
2]{MM} to conclude that the isomorphism $f$ induces an isomorphism $f:
X_{R'}\to Y_{R'}$ of the underlying schemes. By Proposition
\ref{prop:brauer-var}, we see that
$f^\ast\beta_R=\alpha_R$. Specializing to $k$ gives the desired result.

\bibliographystyle{plain}
\bibliography{biblio}

\end{document}